\newtheorem{thm}{Theorem}[section]
\newtheorem{lem}[thm]{Lemma}
\newtheorem{cor}[thm]{Corollary}
\newtheorem{prop}[thm]{Proposition}
\theoremstyle{definition}
\newtheorem{definition}[thm]{Definition}
\theoremstyle{remark}
\numberwithin{equation}{section}
\journal{Journal of Linear Algebra and its Applications}
\begin{document}
	
	\begin{frontmatter}
		
		\title{The equality of generalized matrix functions on the set of all symmetric matrices}
		%\tnotetext[mytitlenote]{Fully documented templates are available in the elsarticle package on \href{http://www.ctan.org/tex-archive/macros/latex/contrib/elsarticle}{CTAN}.}
		
		%% Group authors per affiliation:
		%\author{Elsevier\fnref{myfootnote}}
		%\address{Radarweg 29, Amsterdam}
		%\fntext[myfootnote]{Since 1880.}
		
		%% or include affiliations in footnotes:
		\author[mymainaddress]{Ratsiri Sanguanwong}
		\ead{r.sanguanwong@gmail.com}
		
		\author[mysecondaryaddress]{Kijti Rodtes\corref{mycorrespondingauthor}}
		\cortext[mycorrespondingauthor]{Corresponding author}
		\ead{kijtir@nu.ac.th}
		
		\address[mymainaddress]{Department of Mathematics, Faculty of Science, Naresuan University, Phitsanulok 65000, Thailand}
		\address[mysecondaryaddress]{Department of Mathematics, Faculty of Science, Naresuan University, and Research Center for Academic Excellent in Mathematics, Phitsanulok 65000, Thailand}
		
		\begin{abstract}
			A generalized matrix function $d_\chi^G : M_n(\mathbb{C}) \rightarrow \mathbb{C}$ is a function constructed by a subgroup $G$ of $S_n$ and a complex valued function $\chi$ of $G$. 
				The main purpose of this paper is to find a necessary and sufficient condition for the equality of two generalized matrix functions on the set of all symmetric matrices, $\mathbb{S}_n(\mathbb{C})$. 
			In order to fulfill the purpose, a symmetric matrix $S_\sigma$ is constructed and $d_\chi^G(S_\sigma)$ is evaluated for each $\sigma \in S_n$. 
				By applying the value of $d_\chi^G(S_\sigma)$, it is shown that $d_\chi^G(AB) = d_\chi^G(A)d_\chi^G(B)$ for each $A, B \in \mathbb{S}_n(\mathbb{C})$ if and only if $d_\chi^G = \det$.
			Furthermore, a criterion when $d_\chi^G(AB) = d_\chi^G(BA)$ for every $A, B \in \mathbb{S}_n(\mathbb{C})$, is established. 
		\end{abstract}
		
		\begin{keyword}
			Generalized matrix function\sep Symmetric matrix\sep Symmetric group
			\MSC[2010] 15A15
		\end{keyword}
		
	\end{frontmatter}
	
	%\linenumbers
	
	\section{Introduction}
	In the theory of matrix, a determinant and a permanent are two well-known functions having many useful applications. 
		A concept of a generalized matrix function is to generalize notions of permanents and determinants. 
	Jafari and Madadi showed that two generalized matrix functions $d_\varphi^H$ and $d_\psi^K$ are equal on the set of all nonsingular matrices if and only if the extended function $\hat{\varphi}$ of $\varphi$ and $\hat{\psi}$ of $\psi$ are equal. 
		In fact, by applying the proof of their results, the condition also holds for $M_n(\mathbb{C})$, \cite{2014jm2}.
	
	In 1986, Bosch discovered the factorization of a square matrix in term of the product of two symmetric matrices, \cite{1986b}, which causes our attraction to characterize the equality of generalized matrix functions on the set of all symmetric matrices. 
		If $\varphi$ is not a real valued function and $\psi$ conjugates to $\varphi$, we can calculate that $d_\varphi^H$ and $d_\psi^K$ is equal on the set of all symmetric matrices. 
	We construct a symmetric matrix $S_\sigma$, for each $\sigma \in S_n$ and define an equivalence relation on $S_n$. By using such matrices along with the relation, a necessary and sufficient condition for the equality is proven (see Theorem \ref{thm1}). 
		By applying $S_\sigma$ for some $\sigma \in S_n$, we obtain Theorem \ref{thm15} which related to Corollary 2.4. in \cite{2014jm1}. 
	By these results, to obtain the equalities on the set of all symmetric matrices, it suffices to work on the permutation group which is finite.

	\section{Preliminaries}
	Let $G$ be a subgroup of $S_n$. 
		A \textit{character} of $G$ is a complex valued function of $G$ defined by $g \mapsto tr(\varphi_g)$, where $\varphi$ is a homomorphism from $G$ into $GL_m(\mathbb{C})$; $m$ is a positive integer and $\varphi_g := \varphi(g)$. 
	A character is a class(conjugacy class) function. 
		Many authors use a character to define a generalized matrix function, but, in this paper, we require such function to be a complex valued function because our main result hold whether the function is a character or not. 
	However, we also investigate some interesting consequences when using a character.
	
	Let $G \leq S_n$ and $\chi : G \rightarrow \mathbb{C}$ be a complex valued function. 
		For each $A \in M_n(\mathbb{C})$, denote $[A]_{ij}$ the entry in the $i$-th row and $j$-th column of $A$. A function $d_\chi^G : M_n(\mathbb{C}) \rightarrow \mathbb{C}$ defined by
			\begin{equation*}
			d_\chi^G(A) = \sum\limits_{\sigma \in G} \chi(\sigma) \prod\limits_{i=1}^{n} [A]_{i\, \sigma(i)}
			\end{equation*}
		is called a\textit{ generalized matrix function} associated with $G$ and $\chi$.
	We can see that $d_\varepsilon^{S_n}$ and $d_1^{S_n}$ are a determinant function and a permanent function, where $\varepsilon$ and $1$ are the principal character and the alternating character, respectively. 
		Moreover, for each $A \in M_n(\mathbb{C})$,
			\begin{equation}\label{eq08}
			d_\chi^G(A) = \sum\limits_{\sigma \in S_n} \hat{\chi}(\sigma) \prod\limits_{i=1}^{n} [A]_{i\, \sigma(i)},
			\end{equation}
		where $\hat{\chi}$ is an extension of $\chi$ which vanishes outside $G$. 
	In this paper, we always consider $d_\chi^G$ as in the form (\ref{eq08}).
	
	A complex square matrix $A$ is called a \textit{symmetric matrix} if $A^T = A$.
		We denote the set of all $n \times n$ complex symmetric matrices by $\mathbb{S}_n(\mathbb{C})$. 
	For each $A, B \in \mathbb{S}_n(\mathbb{C})$, we have $(AB)^T = BA$.

	\section{Main results}
	We know that every permutation in $S_n$ can be uniquely written as a product of some disjoint cycles. 
		Through out this paper, for every permutation $\sigma$ in $S_n$, the decomposition of $\sigma$ means the expression of $\sigma$ in term of a product of disjoint cycles not including a cycle of length 1. 
	For arbitrary positive integer $m$, the set $\{1, \dots, m\}$ is symbolized by $[m]$. 
		For each $\sigma \in S_n$, we denote $\operatorname{Fix}(\sigma) := \{i \in [n] \mid \sigma(i) = i\}$. 
	For every cycle $\omega = (a_1 \, \dots \, a_s)$, we can see directly that $\{a_1, \dots, a_s\} = \operatorname{Fix}(\omega)^c$, the complement of $\operatorname{Fix}(\omega)$. 
		It is also clear that 
			\begin{equation*}
			\hbox{\textquotedblleft$\omega$ and $\pi$ are disjoint cycles if and only if $\operatorname{Fix}(\omega)^c \cap \operatorname{Fix}(\pi)^c = \emptyset$\textquotedblright.}
			\end{equation*}
	Furthermore, 
			\begin{equation*}
			\hbox{\textquotedblleft$\operatorname{Fix}(\omega\pi)^c = \operatorname{Fix}(\omega)^c \cup \operatorname{Fix}(\pi)^c$ if $\omega$ and $\pi$ are disjoint cycles\textquotedblright.} 
			\end{equation*}
		By using these facts, the following lemma holds.
		
	\begin{lem}\label{lem0} 
	Let $\sigma \in S_n$. 
		Suppose that $\sigma = \sigma_1 \cdots \sigma_k$ is the decomposition of $\sigma$ and $i \in [n]$. 
	Then the following statements hold.
			\begin{enumerate}
			\item[(1)] $i \in \operatorname{Fix}(\sigma)$ if and only if $i \in \operatorname{Fix}(\sigma_j)$ for every $j \in [k]$. 
			\item[(2)] If $i \in \operatorname{Fix}(\sigma)^c$, then there exists uniquely element $j \in [k]$ such that $i \in \operatorname{Fix}(\sigma_j)^c$.
			\item[(3)] $\operatorname{Fix}(\sigma)^c = \operatorname{Fix}(\sigma_1)^c \cup \dots \cup \operatorname{Fix}(\sigma_k)^c$.
			\end{enumerate}
	\end{lem}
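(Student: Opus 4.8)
The plan is to establish (3) first, by induction on the number $k$ of cycles appearing in the decomposition, and then to read off (1) and (2) as immediate consequences of (3) together with the pairwise disjointness of $\sigma_1, \dots, \sigma_k$. This order is natural because (3) is the ``additivity'' statement for the sets $\operatorname{Fix}(\cdot)^c$, and the two displayed facts recorded just before the lemma are tailored precisely to drive such an induction.

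For (3), the base case $k = 1$ is trivial, since then $\sigma = \sigma_1$. For the inductive step I would put $\tau := \sigma_2 \cdots \sigma_k$, which has $\sigma_2 \cdots \sigma_k$ as its decomposition, so the inductive hypothesis gives $\operatorname{Fix}(\tau)^c = \operatorname{Fix}(\sigma_2)^c \cup \dots \cup \operatorname{Fix}(\sigma_k)^c$. Because $\sigma_1$ is disjoint from each $\sigma_j$ with $j \geq 2$, the first displayed fact yields $\operatorname{Fix}(\sigma_1)^c \cap \operatorname{Fix}(\sigma_j)^c = \emptyset$ for all such $j$, hence $\operatorname{Fix}(\sigma_1)^c \cap \operatorname{Fix}(\tau)^c = \emptyset$, and so — again by the first displayed fact — $\sigma_1$ and $\tau$ are disjoint cycles' products to which the second displayed fact applies. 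That gives $\operatorname{Fix}(\sigma)^c = \operatorname{Fix}(\sigma_1 \tau)^c = \operatorname{Fix}(\sigma_1)^c \cup \operatorname{Fix}(\tau)^c$, and substituting the expression for $\operatorname{Fix}(\tau)^c$ closes the induction.

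Statement (1) then follows by complementation in (3): $i \in \operatorname{Fix}(\sigma)$ if and only if $i \notin \bigcup_{j=1}^k \operatorname{Fix}(\sigma_j)^c$, which holds if and only if $i \notin \operatorname{Fix}(\sigma_j)^c$ for every $j$, i.e.\ $i \in \operatorname{Fix}(\sigma_j)$ for every $j$. For (2), existence of a $j$ with $i \in \operatorname{Fix}(\sigma_j)^c$ is exactly the assertion $i \in \operatorname{Fix}(\sigma)^c$ rewritten via (3); uniqueness follows since $i \in \operatorname{Fix}(\sigma_j)^c \cap \operatorname{Fix}(\sigma_\ell)^c$ with $j \neq \ell$ would contradict the disjointness of $\sigma_j$ and $\sigma_\ell$ through the first displayed fact, so the union in (3) is actually disjoint.

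I do not anticipate any real obstacle; the only place requiring a small amount of care is the inductive step of (3), where one must first verify that $\sigma_1$ is disjoint from the \emph{whole product} $\sigma_2 \cdots \sigma_k$ before the additivity of $\operatorname{Fix}(\cdot)^c$ can be invoked — and it is exactly there that the inductive hypothesis on $\tau$ enters.
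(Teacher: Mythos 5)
Your proof is correct and follows exactly the route the paper intends: the paper states the lemma without proof, merely asserting that it follows from the two displayed facts about disjoint cycles, and your induction on $k$ for (3) followed by deducing (1) and (2) from the disjointness of the union is the natural way to carry that out. The only point needing the small extension you already flag — applying the additivity of $\operatorname{Fix}(\cdot)^c$ to a cycle and a \emph{product} of cycles with disjoint support rather than to two single cycles — is handled correctly.
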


	If $i \in \operatorname{Fix}(\sigma_j)^c$, then, by (1), $i \in \operatorname{Fix}(\sigma)^c$. 
		Moreover, because of (2), $i \in \operatorname{Fix}(\sigma_l)$ if $l \neq j$. 
	So, $\sigma(i) = \sigma_j(i)$ if $i \in \operatorname{Fix}(\sigma_j)^c$. 
		Note that $i \in \operatorname{Fix}(\sigma_j)^c$ implies $\sigma_j^t(i) \in \operatorname{Fix}(\sigma_j)^c$ for every natural number $t$. 
	Thus, by using mathematical induction,
			\begin{equation*}
			\hbox{\textquotedblleft$\sigma^t(i) = \sigma_j^t(i)$ if $i \in \operatorname{Fix}(\sigma_j)^c$\,\textquotedblright.}
			\end{equation*}

	Now, for the decomposition $\sigma = \sigma_1\sigma_2\cdots\sigma_k$, we define the set $[\sigma]$ as follow.
			\begin{align*}
			[\sigma]:=\{\sigma_1^{n_1}\sigma_2^{n_2}\cdots\sigma_k^{n_k} \mid n_1,\dots,n_k \in \{1, -1\}\}.
			\end{align*}
	
	Define a relation $\sim$ on $S_n$ by, for each $\sigma, \tau \in S_n$, $\tau \sim \sigma$ if and only if $\tau \in [\sigma]$. 
		We can see that, by the definition of $[\sigma]$, $\sim$ is an equivalence relation and $[\sigma]$ is the equivalence class of $\sigma$. 
	
	Let $\omega = (a_1 \, a_2\dots\, a_s)$ be a cycle in $S_n$. 
		If $s$ is even, we construct
			\begin{align*} 
			\omega_{(a_1)} := (a_1 \, a_2)(a_3 \, a_4)\cdots(a_{s-1} \, a_s) \text{\ \ \ and\ \ \ } \omega_{(a_s)} := (a_s \, a_1)(a_2 \, a_3)\cdots(a_{s-2} \, a_{s-1}),
			\end{align*} 
		and then denote $S(\omega) := \{\omega_{(a_1)},\omega_{(a_s)}\}$. 
	Define the set $X_\omega$ by 
			\begin{equation*}
			X_\omega := \left\{
			\begin{array}{ll}
			\{\omega,\omega^{-1}\}, & \hbox{if $\omega$ is a cycle of odd length,} \\
			\{\omega,\omega^{-1}\} \cup S(\omega), & \hbox{if $\omega$ is a cycle of even length.}
			\end{array}
			\right.
			\end{equation*}
		Note that, if $\omega$ is a transposition, then $X_\omega = \{\omega\}$.
	For every two subsets $A, B$ of $S_n$, denote $AB = \{\sigma\tau \mid \sigma \in A \text{ and } \tau \in B\}$. 
		For any $\sigma \in S_n$, we define 
			\begin{equation*}
			X_\sigma := X_{\sigma_1}\cdots X_{\sigma_k},
			\end{equation*}
		where $\sigma=\sigma_1\cdots\sigma_k$ is the decomposition of $\sigma$. 
	By the definition of $[\sigma]$, we obtain that $[\sigma] \subseteq X_\sigma$.
		So, we can classify elements in $S_n$ into two types.
		
	\begin{definition}
		The permutation $\sigma$ is said to be \textit{type-I} if $X_\sigma = [\sigma]$ and it is said to be \textit{type-II} if it is not a type-I.
	\end{definition}

	Then every cycle of odd length is type-I. 
		If $\omega$ is a cycle of even length, it is clear by the definition of $[\omega]$ and $X_\omega$ that $\omega$ is type-I if and only if it is a transposition. 
	By the direct computation, $S(\omega) = S(\omega^{-1})$, for each cycle $\omega$ of even length, and hence, $X_\pi = X_{\pi^{-1}}$, for every cycle $\pi$ in $S_n$. 
		This implies that $X_\sigma = X_{\tau}$, for each $\tau \in [\sigma]$.
	
	\begin{prop}\label{prop1} Let $\sigma \in S_n$. Then $\sigma$ is type-II if and only if the decomposition of $\sigma$ contains a type-II cycle.
	\end{prop}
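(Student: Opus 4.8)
The plan is to read off both implications from the structure of $[\sigma]$ and $X_\sigma$ as products over the disjoint cycles $\sigma_1,\dots,\sigma_k$ in the decomposition $\sigma=\sigma_1\cdots\sigma_k$, reducing everything to how a permutation acts on the support $\operatorname{Fix}(\sigma_j)^c$ of a single cycle.

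For the ``if'' direction I would argue the contrapositive. Assume every $\sigma_j$ is type-I. By the remarks following the definition of type-I, each $\sigma_j$ is then either a cycle of odd length or a transposition, so in every case $X_{\sigma_j}=\{\sigma_j,\sigma_j^{-1}\}$ (when $\sigma_j$ is a transposition this set is the singleton $\{\sigma_j\}$, and it still coincides with $[\sigma_j]$). Consequently
\[
X_\sigma = X_{\sigma_1}\cdots X_{\sigma_k} = \{\sigma_1,\sigma_1^{-1}\}\cdots\{\sigma_k,\sigma_k^{-1}\} = [\sigma],
\]
so $\sigma$ is type-I. This step is just a set computation.

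For the ``only if'' direction, suppose the decomposition contains a type-II cycle; after relabeling say $\sigma_1$ is type-II, i.e.\ an even cycle of length $s\ge 4$, written $\sigma_1=(a_1\,a_2\,\dots\,a_s)$. Then $S(\sigma_1)\subseteq X_{\sigma_1}$ is nonempty; fix $\omega:=\omega_{(a_1)}=(a_1\,a_2)(a_3\,a_4)\cdots(a_{s-1}\,a_s)\in S(\sigma_1)$ and set $\tau:=\omega\,\sigma_2\cdots\sigma_k$. Since $\omega\in X_{\sigma_1}$ and $\sigma_j\in X_{\sigma_j}$ for $j\ge 2$, we have $\tau\in X_\sigma$, so it suffices to show $\tau\notin[\sigma]$, as this yields $[\sigma]\subsetneq X_\sigma$ and hence that $\sigma$ is type-II. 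To that end, note $\operatorname{Fix}(\omega)^c=\{a_1,\dots,a_s\}=\operatorname{Fix}(\sigma_1)^c$ and that $\omega,\sigma_2,\dots,\sigma_k$ are pairwise disjoint; so by the facts recorded before Lemma \ref{lem0}, the set $\operatorname{Fix}(\sigma_1)^c$ is $\tau$-invariant and $\tau$ acts on it exactly as $\omega$. On the other hand, any $\rho\in[\sigma]$ has the form $\rho=\sigma_1^{m_1}\cdots\sigma_k^{m_k}$ with $m_j\in\{1,-1\}$, and the same disjointness argument shows $\operatorname{Fix}(\sigma_1)^c$ is $\rho$-invariant with $\rho$ acting on it as $\sigma_1^{m_1}$, an $s$-cycle. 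But $\omega$ has all orbits of size $2$, whereas an $s$-cycle with $s\ge 4$ has an orbit of size $s>2$; hence $\omega$ and $\sigma_1^{m_1}$ differ as permutations of $\operatorname{Fix}(\sigma_1)^c$, so $\tau\ne\rho$. As $\rho\in[\sigma]$ was arbitrary, $\tau\notin[\sigma]$.

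The real content is the ``only if'' direction, and more precisely the verification that the surgery element $\tau$ obtained by replacing the type-II cycle $\sigma_1$ with a member of $S(\sigma_1)$ escapes $[\sigma]$. I expect the only mild obstacle there to be bookkeeping: making precise the ``restrict to the support of a single cycle'' reduction via the disjointness facts quoted above (essentially Lemma \ref{lem0} and its consequences). Once that reduction is in place, the elementary observation that $\omega$ has a different orbit structure from any power of the $s$-cycle $\sigma_1$ closes the argument immediately, and nothing deeper is needed.
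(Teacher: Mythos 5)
Your proof is correct and takes essentially the same route as the paper: the contrapositive set computation $X_\sigma = X_{\sigma_1}\cdots X_{\sigma_k} = [\sigma_1]\cdots[\sigma_k] = [\sigma]$ for one implication, and for the other the exhibit $(\sigma_1)_{(a_1)}\sigma_2\cdots\sigma_k \in X_\sigma \setminus [\sigma]$, where the paper appeals to uniqueness of the cycle decomposition and you instead compare orbit sizes on $\operatorname{Fix}(\sigma_1)^c$ --- the same fact, spelled out. The only (harmless) slip is that your labels ``if'' and ``only if'' are swapped relative to the statement, but both implications are established.
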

	\begin{proof}
		Let $\sigma = \sigma_1\cdots\sigma_k$ be the decomposition of $\sigma$.
			Assume that the decomposition does not contain any type-II cycle. 
		Then $[\sigma_i] = \{\sigma_i,\sigma_i^{-1}\} = X_{\sigma_i}$ for every $1 \leq i \leq k$.
			So, by the definition of $[\sigma]$,
				\begin{equation*}
				X_\sigma = X_{\sigma_1}\cdots X_{\sigma_k} = [\sigma_1] \cdots [\sigma_k] = [\sigma],
				\end{equation*}
			that is, $\sigma$ is type-I. 
		Conversely, suppose that the decomposition contains a type-II cycle, says $\sigma_1$.
			By considering $\sigma_1$ as $\sigma_1 = (a_1 \, \dots \, a_s)$, we have $s \geq 4$ and $(\sigma_1)_{(a_1)}\sigma_2\sigma_3\cdots\sigma_k \in X_\sigma$. 
		Since $(\sigma_1)_{(a_1)}$ is not equal to $\sigma_1$ nor $\sigma_1^{-1}$ and because of the uniqueness of the decomposition of a permutation, $(\sigma_1)_{(a_1)}\sigma_2\sigma_3\cdots\sigma_k \not\in [\sigma]$, that is, $X_{\sigma} \neq [\sigma]$. Thus $\sigma$ is type-II.
	\end{proof}
	
	\begin{prop}\label{prop3}\label{prop2} 
	Let $\sigma \in S_n$ and $\tau \in X_\sigma$. 
		Then the following statements hold.
			\begin{enumerate}
			\item[(1)] $\operatorname{Fix}(\sigma) = \operatorname{Fix}(\tau)$.
			\item[(2)] $X_\tau \subseteq X_\sigma$.
			\item[(3)] $X_\tau = X_\sigma$ if and only if $\tau \in [\sigma]$.
			\end{enumerate}
	\end{prop}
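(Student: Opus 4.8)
The plan is to push everything down to the level of the individual cycles in the decomposition $\sigma=\sigma_1\cdots\sigma_k$, via the identities $X_\sigma=X_{\sigma_1}\cdots X_{\sigma_k}$ and $[\sigma]=[\sigma_1]\cdots[\sigma_k]$. The one structural fact I would record first is a \emph{unique factorization} principle: the sets $\operatorname{Fix}(\sigma_i)^c$ are pairwise disjoint, while every element of $X_{\sigma_i}$ (namely $\sigma_i$, $\sigma_i^{-1}$, or, when $\sigma_i$ has even length at least $4$, one of the two members of $S(\sigma_i)$) is supported inside $\operatorname{Fix}(\sigma_i)^c$; hence each element of $X_\sigma$ is written as $\tau_1\cdots\tau_k$ with $\tau_i\in X_{\sigma_i}$ in exactly one way, $\tau_i$ being recovered from the restriction of $\tau$ to $\operatorname{Fix}(\sigma_i)^c$. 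In particular $|X_\sigma|=\prod_{i=1}^k|X_{\sigma_i}|$, and any two such factorizations agree factor by factor.

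For (1), write $\tau=\tau_1\cdots\tau_k$ with $\tau_i\in X_{\sigma_i}$; inspecting the four possible shapes of $\tau_i$ gives $\operatorname{Fix}(\tau_i)^c=\operatorname{Fix}(\sigma_i)^c$ in every case, and since the $\tau_i$ act on disjoint sets, $\operatorname{Fix}(\tau)^c=\bigcup_i\operatorname{Fix}(\tau_i)^c=\bigcup_i\operatorname{Fix}(\sigma_i)^c=\operatorname{Fix}(\sigma)^c$ by Lemma \ref{lem0}(3). For (2), the decomposition of $\tau$ is the concatenation of the decompositions of the $\tau_i$, the point being that a $\tau_i\in S(\sigma_i)$ is a product of several disjoint transpositions rather than a single cycle. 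Regrouping the factors defining $X_\tau$ by the index $i$ they come from, the $i$-th block equals $X_{\tau_i}$ when $\tau_i$ is a single cycle — and then $\tau_i\in\{\sigma_i,\sigma_i^{-1}\}$, so $X_{\tau_i}=X_{\sigma_i}$ using $X_\pi=X_{\pi^{-1}}$ — and equals the singleton $\{\tau_i\}$ when $\tau_i\in S(\sigma_i)$ (each transposition $\rho$ has $X_\rho=\{\rho\}$). In both cases the $i$-th block lies in $X_{\sigma_i}$, hence $X_\tau\subseteq X_{\sigma_1}\cdots X_{\sigma_k}=X_\sigma$.

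For (3), the "if" direction is the remark already in the text that $X_\sigma=X_\tau$ whenever $\tau\in[\sigma]$. For "only if", suppose $X_\tau=X_\sigma$ and again write $\tau=\tau_1\cdots\tau_k$ with $\tau_i\in X_{\sigma_i}$. The block computation from (2) gives $|X_\tau|=\prod_i c_i$ with $c_i=|X_{\sigma_i}|$ when $\tau_i\in\{\sigma_i,\sigma_i^{-1}\}$ and $c_i=1$ otherwise (in which case $\sigma_i$ has even length at least $4$, so $|X_{\sigma_i}|=4>1$). Comparing with $|X_\sigma|=\prod_i|X_{\sigma_i}|$ forces $\tau_i\in\{\sigma_i,\sigma_i^{-1}\}$ for every $i$, i.e. $\tau=\sigma_1^{n_1}\cdots\sigma_k^{n_k}$ with each $n_i\in\{1,-1\}$, which is exactly $\tau\in[\sigma]$. (One can skip the counting: if some $\tau_i\notin\{\sigma_i,\sigma_i^{-1}\}$, then every element of $X_\tau$ restricts to $\tau_i\neq\sigma_i$ on $\operatorname{Fix}(\sigma_i)^c$, whereas $\sigma$ restricts to $\sigma_i$ there, so $\sigma\notin X_\tau=X_\sigma$, contradicting $\sigma\in[\sigma]\subseteq X_\sigma$.)

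The only genuine difficulty is bookkeeping: an even-length cycle $\sigma_i$ contributes to $X_\sigma$ elements that are not cycles but products of disjoint transpositions, so "the decomposition of $\tau$" is a proper refinement of the factorization $\tau=\tau_1\cdots\tau_k$, and one must verify that $X_\tau$, defined through that finer decomposition, reorganizes into exactly the blocks indexed by the $\sigma_i$ with the shapes described. Once the unique-factorization principle and this block description are in place, each of (1)--(3) is a one-line consequence.
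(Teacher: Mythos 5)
Your proof is correct and follows essentially the same route as the paper: decompose $\sigma$ into disjoint cycles, write $\tau=\tau_1\cdots\tau_k$ with $\tau_i\in X_{\sigma_i}$, and analyze each factor case by case ($X_{\tau_i}=X_{\sigma_i}$ when $\tau_i\in\{\sigma_i,\sigma_i^{-1}\}$, a singleton when $\tau_i\in S(\sigma_i)$). Your explicit unique-factorization observation (elements of $X_{\sigma_i}$ are supported exactly on the pairwise disjoint sets $\operatorname{Fix}(\sigma_i)^c$) is a welcome addition, since it supplies the justification the paper leaves implicit for the step ``$X_{\tau_i}\neq X_{\sigma_i}$ for some $i$ implies $X_\tau\neq X_\sigma$'' in part (3).
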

	\begin{proof}
	(1) It is straightforward by the definition of $X_\sigma$ that $\operatorname{Fix}(\sigma)^c = \operatorname{Fix}(\tau)^c$. 
		Hence $\operatorname{Fix}(\sigma) = \operatorname{Fix}(\tau)$.
		
	(2) Let $\sigma_1 \cdots \sigma_k$ be the decomposition of $\sigma$. 
		Then $\tau = \tau_1 \cdots \tau_k$, where $\tau_i \in X_{\sigma_i}$. 
	If $\tau_i = \sigma_i$ or $\sigma_i^{-1}$, then $X_{\tau_i} = X_{\sigma_i}$.
		Suppose that $\sigma_i$ is type-II and $\tau_i$ is not $\sigma_i$ nor $\sigma_i^{-1}$. 
	We may consider $\sigma_i$ in the form $\sigma_i = (a_1 \, \dots \, a_s)$. 
		Then $\tau_i$ is either $(\sigma_i)_{(a_1)}$ or $(\sigma_i)_{(a_s)}$. 
	Both cases imply that $X_{\tau_i} = \{\tau_i\} \subseteq X_{\sigma_i}$. 
		Thus $X_\tau = X_{\tau_1} \cdots X_{\tau_k} \subseteq X_{\sigma_1} \cdots X_{\sigma_k} = X_\sigma$. 
		
	(3) If $\tau \in [\sigma]$, we can conclude that $X_\sigma = X_\tau$ as we discuss before announcing Proposition \ref{prop1}. 
		On the other hand, suppose that $X_\tau = X_\sigma$ and $\tau \not\in [\sigma]$. 
	By the above argument, there exists $i \in [k]$ such that $\tau_i = (\sigma_i)_{(a_1)}$ or $\tau_i = (\sigma_i)_{(a_s)}$, which implies that $X_{\tau_i} \neq X_{\sigma_i}$. 
		This means that $X_\tau \neq X_\sigma$ which is a contradiction. Therefore $\tau \in [\sigma]$.
	\end{proof}
	
	\begin{prop}\label{prop4} 
	Let $\sigma \in S_n$. 
		Then the following statements hold.
			\begin{enumerate}
			\item[(1)] $X_\sigma$ is a union of the equivalence classes of permutations in $S_n$.
			\item[(2)] If $\sigma$ contains exactly $l$ cycles of type-II, then the decomposition of each element of $X_\sigma$ contains at most $l$ type-II cycles.
			\item[(3)] If $\sigma$ contains exactly $l$ cycles of type-II, then $[\sigma]$ is the set of all elements of $X_\sigma$ containing exactly $l$ type-II cycles.  
			\end{enumerate}	
	\end{prop}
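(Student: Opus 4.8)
The plan is to reduce everything to the componentwise description $\tau=\tau_1\cdots\tau_k$ with $\tau_i\in X_{\sigma_i}$ already used in the proof of Proposition~\ref{prop2}, and then keep careful track of how the disjoint cycle decomposition of $\tau$ is assembled from those of the factors $\tau_i$. Here $\sigma=\sigma_1\cdots\sigma_k$ always denotes the decomposition of $\sigma$.

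Part (1) is essentially free: if $\tau\in X_\sigma$, then $[\tau]\subseteq X_\tau\subseteq X_\sigma$, the first inclusion being the remark $[\rho]\subseteq X_\rho$ noted before Proposition~\ref{prop1} and the second being Proposition~\ref{prop2}(2). Hence $X_\sigma=\bigcup_{\tau\in X_\sigma}[\tau]$ is a union of equivalence classes.

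For (2) and (3), let $\tau=\tau_1\cdots\tau_k\in X_\sigma$ with $\tau_i\in X_{\sigma_i}$. The decisive structural fact I would isolate first is that, since $\operatorname{Fix}(\tau_i)^c=\operatorname{Fix}(\sigma_i)^c$ for each $i$ (immediate from the construction of $X_{\sigma_i}$, or from Proposition~\ref{prop2}(1)), the supports $\operatorname{Fix}(\tau_1)^c,\dots,\operatorname{Fix}(\tau_k)^c$ are pairwise disjoint; therefore, by uniqueness of the disjoint cycle decomposition, the decomposition of $\tau$ is exactly the concatenation of the decompositions of the individual $\tau_i$. Now inspect each factor. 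If $\sigma_i$ is type-I (a transposition or an odd cycle), then $X_{\sigma_i}=\{\sigma_i,\sigma_i^{-1}\}$, so $\tau_i$ is a single cycle of the same length, again type-I. If $\sigma_i$ is type-II (an even cycle of length $\geq 4$), then either $\tau_i\in\{\sigma_i,\sigma_i^{-1}\}$, contributing one type-II cycle to the decomposition of $\tau$, or $\tau_i\in S(\sigma_i)$, in which case $\tau_i$ is a product of transpositions and contributes only type-I cycles. Consequently the number of type-II cycles in the decomposition of $\tau$ equals the number of indices $i$ for which $\sigma_i$ is type-II and $\tau_i\in\{\sigma_i,\sigma_i^{-1}\}$; this is at most $l$, proving (2). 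For (3), this same count equals $l$ precisely when $\tau_i\in\{\sigma_i,\sigma_i^{-1}\}$ for every type-II index $i$ — and for type-I indices $\tau_i\in\{\sigma_i,\sigma_i^{-1}\}$ holds automatically — i.e. exactly when $\tau_i=\sigma_i^{n_i}$ with $n_i\in\{1,-1\}$ for all $i$, which by definition means $\tau\in[\sigma]$. Conversely, every $\tau\in[\sigma]$ has the form $\sigma_1^{n_1}\cdots\sigma_k^{n_k}$, whose decomposition consists of the cycles $\sigma_i^{n_i}$, so it carries exactly $l$ type-II cycles, and $[\sigma]\subseteq X_\sigma$ was noted before Proposition~\ref{prop1}; this gives both inclusions of (3).

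I expect the only delicate point to be the bookkeeping around elements of $S(\sigma_i)$: one must remember that they are not cycles but products of $s/2$ transpositions, so they never contribute a type-II cycle, and that — thanks to the disjointness of the supports of the $\tau_i$ — no new type-II cycle can arise from interaction between different factors. Once the concatenation-of-decompositions fact is nailed down, the rest is a clean count.
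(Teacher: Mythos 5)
Your proof is correct, but it is organized quite differently from the paper's. For part (1) the paper builds an explicit partition $X_\sigma = [\sigma] \cup \bigl(\bigcup_{j=1}^{l}\bigcup_{(S)\in Seq(j)} X_\sigma^S\bigr)$, where each block $X_\sigma^S = \prod_{i\in S}S(\sigma_i)\prod_{i\notin S}[\sigma_i]$ is itself exhibited as a union of classes; you instead observe that $[\tau]\subseteq X_\tau\subseteq X_\sigma$ for every $\tau\in X_\sigma$, so $X_\sigma=\bigcup_{\tau\in X_\sigma}[\tau]$ — a genuinely shorter and cleaner argument for (1) on its own. For (2) and (3) the underlying count is the same in both proofs (a type-II factor $\sigma_i$ contributes a type-II cycle exactly when $\tau_i\in\{\sigma_i,\sigma_i^{-1}\}$, while elements of $S(\sigma_i)$ are products of transpositions and hence contribute only type-I cycles), but you make explicit the point the paper leaves implicit, namely that the disjointness of the supports $\operatorname{Fix}(\tau_i)^c=\operatorname{Fix}(\sigma_i)^c$ forces the decomposition of $\tau$ to be the concatenation of the decompositions of the $\tau_i$; that is the right fact to isolate and your handling of it is sound. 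The one thing your streamlined route does not deliver is the explicit formula (\ref{eq01}) for $X_\sigma$ as a disjoint union indexed by $Seq(j)$, which the paper reuses in the induction step of Theorem \ref{thm1}; so the paper's heavier bookkeeping is buying something downstream, whereas your version proves the proposition more economically but would leave that formula still to be established.
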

	\begin{proof}
	If $\sigma$ is type-I, then $X_\sigma = [\sigma]$. 
		Suppose that $\sigma$ is type-II. 
	Then the decomposition of $\sigma$ contains a type-II cycle. 
		Let $\sigma = \sigma_1 \cdots \sigma_l \sigma_{l+1} \cdots \sigma_k$ be the decomposition of $\sigma$, where $\sigma_1, \dots ,\sigma_l$ are all type-II cycles contained in this decomposition. 
	For each $j \in [l]$, define 
			\begin{equation*}
			Seq(j) := \{(x_1, \dots ,x_j) \in \mathbb{N}^j  \mid x_i\leq l \text{ and } x_1 < x_2 < \cdots < x_j\}.
			\end{equation*}
		For each sequence $(S) = (x_1, \dots, x_j) \in Seq(j)$, denote $S:= \{x_1, \dots ,x_j\}$ and
			\begin{equation*}
			X_\sigma^S := \prod\limits_{i \in S}S(\sigma_i)\prod\limits_{i\not\in S}[\sigma_i].
			\end{equation*}
	Straightforward by this definition, we have $X_\sigma^S \subset X_\sigma$. 
		Note that, for each $1\leq j \leq l$,
			\begin{equation*}
			S(\sigma_j) = \{(\sigma_j)_{(a_1^j)}, (\sigma_j)_{(a_{s_j}^j)}\} = [(\sigma_j)_{(a_1^j)}] \cup [(\sigma_j)_{(a_{s_j}^j)}],
			\end{equation*}
		where $\sigma_j = (a_1^j \, \dots \, a_{s_j}^j)$. 
	Let $\mathbb{I} = \{a_1^{x_1}, a_{s_{x_1}}^{x_1}\} \times \cdots \times \{a_1^{x_j}, a_{s_{x_j}}^{x_j}\}$. 
		Then
			\begin{equation}\label{eq10}
			X_\sigma^S = \bigcup\limits_{(S_{x_1}, \dots, S_{x_j}) \in \mathbb{I}}(\prod\limits_{i \in S}[(\sigma_i)_{(S_i)}]\prod\limits_{i \not\in S}[\sigma_i]) = \bigcup\limits_{(S_{x_1}, \dots, S_{x_j}) \in \mathbb{I}}[\prod\limits_{i \in S}(\sigma_i)_{(S_i)}\prod\limits_{i \not\in S}\sigma_i].
			\end{equation} 
	Thus $X_\sigma^S$ is a union of some equivalence classes of $S_n$.
		So, it suffices to show that $X_\sigma = [\sigma] \cup (\bigcup\limits_{j = 1}^l \bigcup\limits_{(S) \in Seq(j)} X_\sigma^S)$. 
	It is clear that
			\begin{equation*}
			[\sigma] \cup (\bigcup\limits_{j = 1}^l \bigcup\limits_{(S) \in Seq(j)} X_\sigma^S) \subseteq X_\sigma.
			\end{equation*} 
		Let $\tau = \tau_1 \cdots \tau_k \in X_\sigma$, where $\tau_i \in X_{\sigma_i}$. 
	Suppose that $\tau \not \in [\sigma]$. 
		By using the same argument as in the proof of Proposition \ref{prop3}, the decomposition of $\tau$ contains an element in $S(\sigma_i)$ for some $1 \leq i \leq l$. 
	Without loss of generality, we may assume that the decomposition of $\tau$ contains an element in each $S(\sigma_1), \dots, S(\sigma_j)$ but not contain element in $S(\sigma_{j+1}), S(\sigma_{j+2}), \dots, S(\sigma_l)$. 
		Then 
			\begin{equation*}
			\tau \in \prod\limits_{i = 1}^jS(\sigma_i) \prod\limits_{i = j+1}^k [\sigma_i] = X_\sigma^{S_0},
			\end{equation*}
		where $(S_0) = (1, 2, \dots, j)$. 
	This implies that each element in $X_\sigma$ is contained in $[\sigma]$ or $(\bigcup\limits_{j = 1}^l \bigcup\limits_{(S) \in Seq(j)} X_\sigma^S)$. 
		So, 
			\begin{equation}\label{eq01}
			X_\sigma = [\sigma] \cup (\bigcup\limits_{j = 1}^l \bigcup\limits_{(S) \in Seq(j)} X_\sigma^S).
			\end{equation}
	This proves that (1) holds. 
		For each $i \in [l]$, each cycle in $[\sigma_i]$ is type-II while every element in $S(\sigma_i)$ is type-I. 
	Since $\sigma_i$ is type-I if $i \geq l$, by considering the construction of $X_\sigma^S$, where $S \in Seq(j)$, we can see that the decomposition of each element in $X_\sigma^S$ contains exactly $l-j$ type-II cycles. 
		Thus (2) and (3) hold.
	\end{proof}
	
    \begin{definition}
	For each $\sigma \in S_n$, we define the $n \times n$ matrix $S_\sigma$ as
			\begin{equation*}
			[S_\sigma]_{ij} = \left\{
			\begin{array}{ll}
			1, & \hbox{if $\sigma(i)=j$ or $\sigma^{-1}(i)=j$,} \\
			0, & \hbox{otherwise.}
			\end{array}
			\right.
			\end{equation*}
    \end{definition}
	
	It is not difficult to see that $S_\sigma$ is a symmetric matrix and $S_\sigma = S_{\sigma^{-1}}$. 
		Furthermore, $S_\sigma=P_\sigma$ if and only if $\sigma^2 = id$, where $P_\sigma$ is the permutation matrix corresponding to $\sigma$.
	
	\begin{thm}\label{lem1}
	Let $G \leq S_n$ and $\chi:G \rightarrow \mathbb{C}$ be a complex valued function. 
		For each $\sigma \in S_n$, $d_\chi^G(S_\sigma)=\sum\limits_{\tau\in X_\sigma} \hat{\chi}(\tau)$.
	\end{thm}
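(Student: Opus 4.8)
The plan is to evaluate $d_\chi^G(S_\sigma) = \sum_{\pi \in S_n} \hat\chi(\pi)\prod_{i=1}^{n} [S_\sigma]_{i\,\pi(i)}$ directly, by pinning down exactly which $\pi$ contribute. Since every entry of $S_\sigma$ is $0$ or $1$, the product $\prod_{i=1}^{n} [S_\sigma]_{i\,\pi(i)}$ equals $1$ precisely when $[S_\sigma]_{i\,\pi(i)} = 1$ for all $i$, i.e.\ when $\pi(i) \in \{\sigma(i),\sigma^{-1}(i)\}$ for every $i \in [n]$, and equals $0$ otherwise. Setting $T_\sigma := \{\pi \in S_n \mid \pi(i) \in \{\sigma(i),\sigma^{-1}(i)\}\text{ for all }i \in [n]\}$, we get $d_\chi^G(S_\sigma) = \sum_{\pi \in T_\sigma} \hat\chi(\pi)$, so the whole statement reduces to the purely combinatorial identity $T_\sigma = X_\sigma$.

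Next I would reduce $T_\sigma = X_\sigma$ to the case of a single cycle. Let $\sigma = \sigma_1\cdots\sigma_k$ be the decomposition of $\sigma$. By Lemma~\ref{lem0}, if $i \in \operatorname{Fix}(\sigma)$ then $\sigma(i)=\sigma^{-1}(i)=i$, forcing $\pi(i)=i$ for any $\pi \in T_\sigma$; and if $i \in \operatorname{Fix}(\sigma_j)^c$ for the unique such $j$, then $\sigma(i)=\sigma_j(i)$ and $\sigma^{-1}(i)=\sigma_j^{-1}(i)$, both of which again lie in $\operatorname{Fix}(\sigma_j)^c$. Hence any $\pi \in T_\sigma$ maps each $\operatorname{Fix}(\sigma_j)^c$ into itself, and, being a bijection of the finite set $[n]$, onto itself; thus $\pi$ factors uniquely as a product $\pi = \pi_1\cdots\pi_k$ of commuting permutations with $\operatorname{Fix}(\pi_j)^c \subseteq \operatorname{Fix}(\sigma_j)^c$, and $\pi \in T_\sigma$ if and only if $\pi_j \in T_{\sigma_j}$ for every $j$. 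Since $X_\sigma = X_{\sigma_1}\cdots X_{\sigma_k}$ with each factor supported on $\operatorname{Fix}(\sigma_j)^c$ (by Proposition~\ref{prop2}(1) every element of $X_{\sigma_j}$ has the same fixed-point set as $\sigma_j$), matching the two factorizations reduces everything to showing $T_\omega = X_\omega$ for a single cycle $\omega$.

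For a single cycle $\omega = (a_1\,a_2\,\cdots\,a_s)$ I would identify its support with $\mathbb{Z}/s$ so that $\omega(i) = i+1$; the case $s=2$ is immediate, so assume $s \ge 3$. Each $\pi \in T_\omega$ has the form $\pi(i) = i + \varepsilon_i$ with $\varepsilon_i \in \{+1,-1\}$, and I would decide when such a map is a bijection: the only possible preimages of $j$ are $j-1$ (when $\varepsilon_{j-1}=+1$) and $j+1$ (when $\varepsilon_{j+1}=-1$), so $\pi$ is bijective exactly when, for every $j$, precisely one of these holds; a short Boolean case analysis shows this is equivalent to $\varepsilon_{j+2} = \varepsilon_j$ for all $j$. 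When $s$ is odd this forces $\varepsilon$ constant, giving $\pi \in \{\omega,\omega^{-1}\}$; when $s$ is even it forces $\varepsilon_i$ to depend only on the parity of $i$, yielding the two constant choices $\omega,\omega^{-1}$ together with the two alternating ones, which are exactly $\omega_{(a_1)} = (a_1\,a_2)(a_3\,a_4)\cdots(a_{s-1}\,a_s)$ and $\omega_{(a_s)} = (a_s\,a_1)(a_2\,a_3)\cdots(a_{s-2}\,a_{s-1})$. In each case this set is precisely $X_\omega$, and conversely every element of $X_\omega$ visibly satisfies $\pi(i) \in \{\omega(i),\omega^{-1}(i)\}$, so $T_\omega = X_\omega$. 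The crux is this ``$2$-periodicity'' observation for a single cycle; the disjoint-cycle bookkeeping of the second paragraph is then routine, and combining $T_\sigma = X_\sigma$ with the first paragraph yields $d_\chi^G(S_\sigma) = \sum_{\tau \in X_\sigma}\hat\chi(\tau)$.
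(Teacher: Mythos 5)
Your proposal is correct, and while it shares the paper's overall skeleton (reduce to identifying the set $T_\sigma$ of permutations $\pi$ with $\pi(i)\in\{\sigma(i),\sigma^{-1}(i)\}$ for all $i$, then show $T_\sigma=X_\sigma$ cycle by cycle), your argument for the crux is genuinely different and noticeably cleaner. The paper proves the hard inclusion $T_\sigma\subseteq X_\sigma$ by fixing a point $a$ in the support of a cycle $\sigma_1$ of $\sigma$ and running a four-way case analysis on the cycle of $\tau$ through $a$ (transposition or not, $\tau(a)=\sigma(a)$ or $\tau(a)=\sigma^{-1}(a)$), iteratively chasing the orbit to show $\tau$ restricts to $\sigma_1$, $\sigma_1^{-1}$, or one of the two pairings $(\sigma_1)_{(a_1)},(\sigma_1)_{(a_s)}$. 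You instead identify the support of a single cycle with $\mathbb{Z}/s$, write $\pi(i)=i+\varepsilon_i$ with $\varepsilon_i\in\{\pm1\}$ (legitimate, since points of the support cannot be fixed by $\pi$), and observe that bijectivity is exactly the $2$-periodicity condition $\varepsilon_{j+2}=\varepsilon_j$; the parity of $s$ then immediately yields either $\{\omega,\omega^{-1}\}$ or these together with the two alternating sign patterns, which are precisely $\omega_{(a_1)}$ and $\omega_{(a_s)}$. This replaces the paper's longest case analysis with a two-line counting of preimages, at the small cost of having to set up the disjoint-support factorization $\pi=\pi_1\cdots\pi_k$ explicitly (which you do correctly: $T_\sigma$ forces $\pi$ to fix $\operatorname{Fix}(\sigma)$ pointwise and to stabilize each $\operatorname{Fix}(\sigma_j)^c$). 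Both arguments are complete; yours is the more transparent one and would also make the structure of $X_\omega$ for even cycles feel less ad hoc.
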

	\begin{proof}
	Let $\sigma \in S_n$. 
		To prove that $d_\chi^G(S_\sigma)=\sum\limits_{\tau\in X_\sigma}\hat{\chi}(\tau)$, it suffices to show that $[S_\sigma]_{i \,\tau(i)} = 1$ for each $i \in [n]$ if and only if $\tau \in X_\sigma$.
	Suppose that $\tau \in X_\sigma$. 
		To show that $[S_\sigma]_{i \, \tau(i)} = 1$ for each $i \in [n]$, it is enough to show that, for every $i \in [n]$, $\tau(i) = \sigma(i)$ or $\tau(i) = \sigma^{-1}(i)$. Let $i \in [n]$. 
	Note that, by Proposition \ref{prop2}, $\tau \in X_\sigma$ implies $\operatorname{Fix}(\sigma) = \operatorname{Fix}(\tau)$. 
		Thus, if $i \in \operatorname{Fix}(\sigma)$, then $\tau(i) = i = \sigma(i)$. 
	Now, suppose that $i \in \operatorname{Fix}(\sigma)^c$. 
		Then there exists a unique cycle in the decomposition of $\sigma$, says $\pi$, such that $\sigma(i) = \pi(i)$. 
	Since $\tau \in X_\sigma$, there exists a permutation $\theta \in X_\pi$ such that $\theta$ is contained in the decomposition of $\tau$. 
		If $\pi$ is a cycle of odd length, then $X_\pi = \{\pi, \pi^{-1}\}$, which implies that $\theta = \pi$ or $\pi^{-1}$. 
	Since $i \in \operatorname{Fix}(\pi)^c$ and $\theta \in X_\pi$, $i \in \operatorname{Fix}(\theta)^c$. 
		By Lemma \ref{lem0}(2), $\tau(i) = \theta(i)$ which is $\pi(i)$ or $\pi^{-1}(i)$. 
	In other words, 
			\begin{equation*}
			\hbox{$\tau(i) = \sigma(i)$ or $\tau(i) = \sigma^{-1}(i)$.}
			\end{equation*}
		If $\pi$ is a cycle of even length, then $X_\pi = \{\pi, \pi^{-1}\} \cup S(\pi)$. 
	By using the same argument as the above, the proof is done in case $\theta \in \{\pi, \pi^{-1}\}$. 
		Suppose that $\theta \in S(\pi)$. 
	Since $i \in \operatorname{Fix}(\pi)^c$, we can write $\pi$ in the form $(i \,  \pi(i) \, \pi^2(i) \, \dots \, \pi^{s-1}(i))$, where $s$ is the length of $\pi$. 
		Then $\theta = \pi_{(i)}$ or $\theta = \pi_{(\pi^{s-1}(i))}$. Since $\tau(i) = \theta(i)$, 
			\begin{equation*}
			\hbox{$\tau(i) = \pi(i) = \sigma(i)$ or $\tau(i) = \pi^{s-1}(i) = \pi^{-1}(i) = \sigma^{-1}(i)$.}
			\end{equation*}
	Thus $[S_\sigma]_{i \, \tau(i)} = 1$ for each $i \in [n]$.
		
	Conversely, suppose that $[S_\sigma]_{i \, \tau(i)} = 1$ for every $i \in [n]$. 
		Thus, for each $i \in [n]$, 
			\begin{equation}\label{eq09}
			\hbox{$\tau(i) = \sigma(i)$ or $\tau(i) = \sigma^{-1}(i)$,}
			\end{equation}
		which implies that $i \in \operatorname{Fix}(\tau)$ if and only if $i \in \operatorname{Fix}(\sigma)$. 
	So we obtain that 
			\begin{equation} \label{eq02}
			\operatorname{Fix}(\sigma) = \operatorname{Fix}(\tau).
			\end{equation}
		Let $\sigma=\sigma_1\cdots \sigma_k$ and $\tau = \tau_1\cdots \tau_l$ be the decompositions of $\sigma$ and $\tau$, respectively. 
	First of all, we will show that, for each $j \in [k]$, there exists $\theta \in X_{\sigma_j}$ such that $\theta$ is a factor of $\tau$. 
		It suffices to work with $\sigma_1$ because the product of disjoint cycles are commute. 
	Let $a \in \operatorname{Fix}(\sigma_1)^c$. 
		Clearly, $a \not\in \operatorname{Fix}(\sigma) = \operatorname{Fix}(\tau)$.
	There exists a unique cycle in the decomposition of $\tau$, without loss of generality, we may assume that it is $\tau_1$, such that $\tau_1(a) = \tau(a)$.
		If $\sigma_1$ is a transposition, then
			\begin{equation*}
			\sigma(a) = \sigma_1(a) = \sigma_1^{-1}(a) = \sigma^{-1}(a).
			\end{equation*}
	By applying (\ref{eq09}), this condition implies that 
			\begin{equation*}
			\hbox{$\tau_1(a) = \tau(a) = \sigma(a) = \sigma_1(a)$ and $\tau_1^2(a) = \tau^2(a) = \tau\sigma(a)$}.
			\end{equation*}
		By (\ref{eq09}) again, $\tau\sigma(a) = \sigma^{-1}\sigma(a)$ or $\tau\sigma(a) = \sigma^2(a)$. Since $\sigma^{-1}\sigma(a) = a = \sigma_1^2(a) = \sigma^2(a)$, $\tau_1^2(a) = a$. 
	Thus $\tau_1 = \sigma_1$ if $\sigma_1$ is a transposition. 
		This case is done by choosing $\theta = \tau_1$.
		
	Assume that $\sigma_1$ is not a transposition. 
		Without loss of generality, we can write 
			\begin{equation*}
			\hbox{$\sigma_1 = (a \, \sigma_1(a) \, \sigma_1^2(a) \, \dots \, \sigma_1^{s-1}(a))$ and $\tau_1 = (a \, \tau_1(a) \, \tau_1^2(a) \, \dots \, \tau_1^{t-1}(a))$.}
			\end{equation*}
	Claim that, for each $x \in [t]$, $\tau_1^{x}(a) \in \operatorname{Fix}(\sigma_1)^c$.
		To verify the claim, suppose that $x$ is the smallest element in $[t-1]$ such that $\tau_1^{x}(a) \in \operatorname{Fix}(\sigma_1)$. 
	Since $\tau_1^x(a) = \tau_1(\tau_1^{x-1}(a)) = \tau(\tau_1^{x-1}(a))$, by applying (\ref{eq09}),
			\begin{equation*}
			\hbox{$\sigma_1(\tau_1^{x-1}(a)) = \sigma(\tau_1^{x-1}(a)) = \tau(\tau_1^{x-1}(a)) = \tau_1^x(a)$}
			\end{equation*} 
		or
			\begin{equation*}
			\hbox{$\sigma^{-1}_1(\tau_1^{x-1}(a)) = \sigma^{-1}(\tau_1^{x-1}(a)) = \tau(\tau_1^{x-1}(a)) = \tau_1^x(a)$}.
			\end{equation*} 
	If $\sigma_1(\tau_1^{x-1}(a)) = \tau_1^x(a)$, then 
			\begin{equation*}
			\tau_1^{x-1}(a) = \sigma_1^{-1}\sigma_1\tau_1^{x-1}(a) = \sigma_1^{-1}\tau_1^x(a) = \tau_1^{x}(a),
			\end{equation*}
	which is a contradiction. 
		By the similar reasoning, the contradiction also occurs when $\sigma_1^{-1}(\tau_1^{x-1}(a)) = \tau_1(\tau_1^{x-1}(a)) = \tau_1^x(a)$ as we can see that 
			\begin{equation*}
			\tau_1^{x-1}(a) = \sigma_1\sigma_1^{-1}\tau_1^{x-1}(a) = \sigma_1\tau_1^x(a) = \tau_1^{x}(a). 
			\end{equation*}
	Thus, the claim is true. 
		By applying the claim, we also have that $\operatorname{Fix}(\tau_1)^c \subseteq \operatorname{Fix}(\sigma_1)^c$, so $s \geq t$. 
	We now consider $\tau_1$ as four possible cases.
		
	Case 1: $\tau_1$ is not a transposition and $\tau(a) = \sigma(a)$. 
		Then $\tau_1(a) = \tau(a) = \sigma(a) = \sigma_1(a)$. 
	This indicates that $\tau_1^2(a) = \tau_1\sigma_1(a)$. 
		If $\tau_1\sigma_1(a) = \tau\sigma_1(a) = \sigma^{-1}\sigma_1(a) = \sigma_1^{-1}\sigma_1(a) = a$, then $\tau_1^2(a) = a$, which is not possible because $\tau_1$ is not a transposition. 
	Then $\tau_1^2(a) = \sigma\sigma_1(a) = \sigma_1^2(a)$. 
		By using the above arguments, we can show that 
			\begin{equation*}
			\hbox{ $\tau_1^m(a)  = \sigma_1^m(a)$, for each $m \in [t-1]$.}
			\end{equation*}
	Moreover, $\sigma_1^t(a) = \sigma\sigma_1^{t-1}(a) = \sigma\tau_1^{t-1}(a)$. 
		If $\sigma\tau_1^{t-1}(a) = \tau^{-1}\tau_1^{t-1}(a)$, then $\sigma_1^t(a) = \tau_1^{t-2}(a) = \sigma_1^{t-2}(a)$, that is, $\sigma_1^2(a) = a$, which is a contradiction because $\sigma_1$ is not a transposition. 
	Then
			\begin{equation*}
			\sigma_1^t(a) = \sigma\tau_1^{t-1}(a) = \tau\tau_1^{t-1}(a) = \tau_1^t(a) = a.
			\end{equation*}
		So, we obtain that $s = t$ and hence $\tau_1 = \sigma_1$. 
	By setting $\theta = \tau_1$, this case is done.
		
	Case 2: $\tau_1$ is not a transposition and $\tau(a) = \sigma^{-1}(a)$. 
		By similar argument as Case 1, we have $\tau_1 = \sigma_1^{-1}$. 
	Thus this case is proven by putting $\theta = \tau_1$.
		
	Case 3: $\tau_1$ is a transposition and $\tau(a) = \sigma(a)$. 
		Then $\tau_1(a) = \sigma_1(a)$ and $\tau_1^2(a) = a$. 
	This implies that $\operatorname{Fix}(\tau_1)^c = \{a, \sigma_1(a)\}$. 
		Since $\operatorname{Fix}(\tau) = \operatorname{Fix}(\sigma)$ and $\sigma_1^2(a) \in \operatorname{Fix}(\sigma)^c = \operatorname{Fix}(\tau)^c$, there exists a unique cycle in the decomposition of $\tau$, says $\tau_2$ such that $\sigma_1^2(a) \in \operatorname{Fix}(\tau_2)^c$. 
	Because $\sigma_1$ is not a transposition, we have $\sigma_1^2(a) \neq a$.
		Since $\sigma_1^2(a)\not\in \operatorname{Fix}(\tau_1)^c$, we can conclude that $\tau_1 \neq \tau_2$, that is, $\tau_1$ and $\tau_2$ are disjoint. 
	If $\tau\sigma_1^2(a) = \sigma^{-1}\sigma_1^2(a)$, then $\tau_2\sigma_1^2(a) = \sigma_1(a) = \tau_1(a)$, which is a contradiction because $\tau_1$ and $\tau_2$ are disjoint. 
		This implies that $\tau_2\sigma_1^2(a) = \sigma\sigma_1^2(a) = \sigma_1^3(a)$. 
	If $\tau_2$ is not a transposition, by using the same argument as case 1, $\tau_2 = \sigma_1$. 
		This also contradicts to the fact that $\tau_1$ and $\tau_2$ are disjoint. 
	Thus $\tau_2$ is a transposition. 
		Note that $\sigma_1$ is not a transposition, that is, $s \geq 3$. If $s = 3$, then $\tau_1 = (a \, \sigma_1(a))$ and $\tau_2 = (\sigma_1^2(a) \, a)$, which is a contradiction, and hence $s \geq 4$. 
	If $s = 4$, this case is proved by choosing $\theta = \tau_1\tau_2 = (\sigma_1)_{(a)}$. 
		Suppose that $s > 4$, recall the fact that $\operatorname{Fix}(\sigma)^c = \operatorname{Fix}(\tau)^c$ again, there exists a unique cycle in the decomposition of $\tau$, says $\tau_3$ such that $\sigma_1^4(a) \in \operatorname{Fix}(\tau_3)^c$. 
	By using the same argument as above, we obtain the conclusion that $\tau_3$ is a transposition and $\tau_3\sigma_1^4(a) = \sigma_1^5(a)$. 
		By repeating this method, we can conclude that, for each $m \in \{2, \dots, \lfloor\frac{s}{2}\rfloor\}$,
			\begin{equation*}
			\tau_m = (\sigma_1^{2m-2}(a) \, \sigma_1^{2m-1}(a)).
			\end{equation*}
	If $s$ is odd, then
			\begin{equation*}
			\tau_{\lfloor\frac{s}{2}\rfloor} = (\sigma_1^{s-1}(a) \, \sigma_1^s(a)) = (\sigma_1^{s-1}(a) \, a),
			\end{equation*}
	which is a contradiction because $\tau_1$ and $\tau_{\lfloor\frac{s}{2}\rfloor}$ are disjoint. 
		Thus $s$ is even. 
	Moreover, $\tau_1 \tau_2 \cdots \tau_{\frac{s}{2}} = (\sigma_1)_{(a)}$. 
		Choose $\theta = (\sigma_1)_{(a)}$. 
	This case is verify.
		
	Case 4: $\tau_1$ is a transposition and $\tau(a) = \sigma^{-1}(a)$. 
		By using the similar reasoning as Case 3 and assigning $\theta = (\sigma_1)_{(\sigma_1^{s-1}(a))}$, we obtain that $\theta$ is contained in the decomposition of $\tau$, and hence this case is done.
		
	Now, we have the fact that, for every cycle $\pi$ in the decomposition of $\sigma$, there exists exactly one element $\theta \in X_\pi$ that contained in the decomposition of $\tau$. 
		So, $\tau$ can be expressed as $\tau = \theta_1 \cdots \theta_q$, where $q \geq k$, $\theta_i \in X_{\sigma_i}$, for each $i \in [k]$, and $\theta_i$ and $\theta_j$ are disjoint if $i \neq j$. If $q \neq k$, then
			\begin{equation*} 
			\operatorname{Fix}(\tau)^c = \bigcup\limits_{i = 1}^q \operatorname{Fix}(\theta_i)^c \neq \bigcup\limits_{i = 1}^k \operatorname{Fix}(\theta_i)^c = \bigcup\limits_{i = 1}^k \operatorname{Fix}(\sigma_i)^c = \operatorname{Fix}(\sigma)^c,
			\end{equation*}
		which contradicts to (\ref{eq02}). 
	This implies that $q = k$ and hence $\tau \in X_{\sigma}$. 
		Thus the statement of this theorem is proven.
	\end{proof} 
	
	\begin{cor} 
	Let $\sigma \in S_n$. 
		Then $\operatorname{perm}(S_\sigma) = |X_\sigma|$, where $\operatorname{perm}(S_\sigma)$ is the permanent of $S_\sigma$.
	\end{cor}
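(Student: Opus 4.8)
The plan is to observe that the permanent is itself a generalized matrix function, so the corollary is an immediate specialization of Theorem \ref{lem1}. Concretely, I would take $G = S_n$ and let $\chi \colon S_n \to \mathbb{C}$ be the constant function $\chi \equiv 1$. By the definition of $d_\chi^G$ recorded in (\ref{eq08}), and since the extension $\hat\chi$ of a function already defined on all of $S_n$ is just the function itself, we have $\hat\chi(\tau) = 1$ for every $\tau \in S_n$ and
\begin{equation*}
d_\chi^{S_n}(A) = \sum_{\tau \in S_n} \hat\chi(\tau) \prod_{i=1}^n [A]_{i\,\tau(i)} = \sum_{\tau \in S_n} \prod_{i=1}^n [A]_{i\,\tau(i)} = \operatorname{perm}(A)
\end{equation*}
for every $A \in M_n(\mathbb{C})$, in particular for $A = S_\sigma$.

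Next I would apply Theorem \ref{lem1} to this $G$ and $\chi$. It gives $d_\chi^{S_n}(S_\sigma) = \sum_{\tau \in X_\sigma} \hat\chi(\tau)$, and since $X_\sigma \subseteq S_n$ while $\hat\chi$ is identically $1$ on $S_n$, the right-hand side equals $\sum_{\tau \in X_\sigma} 1 = |X_\sigma|$. Combining the two displays yields $\operatorname{perm}(S_\sigma) = |X_\sigma|$, which is the claim.

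There is essentially no genuine obstacle here: all the real work — showing that $\prod_{i=1}^n [S_\sigma]_{i\,\tau(i)} = 1$ precisely when $\tau \in X_\sigma$ — has already been carried out inside the proof of Theorem \ref{lem1}. The only points to verify are the trivial bookkeeping identifications that $\operatorname{perm} = d_\chi^{S_n}$ for the constant function $\chi \equiv 1$ and that the extension of that function is again itself. Alternatively, one could bypass Theorem \ref{lem1} and argue directly: each summand $\prod_{i=1}^n [S_\sigma]_{i\,\tau(i)}$ lies in $\{0,1\}$, it equals $1$ iff $[S_\sigma]_{i\,\tau(i)} = 1$ for all $i$, and by the equivalence established in the proof of Theorem \ref{lem1} this happens exactly for $\tau \in X_\sigma$; hence the number of nonzero terms of $\operatorname{perm}(S_\sigma)$ is $|X_\sigma|$. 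This being merely a re-derivation of the relevant half of Theorem \ref{lem1}, invoking the theorem directly is the cleaner route.
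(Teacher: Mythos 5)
Your proof is correct and is exactly the intended argument: the paper states this corollary without proof precisely because it is the specialization of Theorem \ref{lem1} to $G = S_n$ and $\chi \equiv 1$, for which $d_\chi^{S_n} = \operatorname{perm}$ and $\sum_{\tau \in X_\sigma}\hat\chi(\tau) = |X_\sigma|$. Nothing further is needed.
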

	
	By using Theorem \ref{lem1}, we can find a necessary and sufficient condition for the equality of two generalized matrix functions on the set of all symmetric matrices as below.
	
	\begin{thm}\label{thm1} 
	Let $H$ and $K$ be subgroups of $S_n$ and $\phi$ and $\psi$ complex valued functions of $H$ and $K$, respectively. 
		Then the following conditions are equivalent.
			\begin{enumerate}
			\item[(1)] $d_\phi^H(A) = d_\psi^K(A)$ for each $A \in \mathbb{S}_n(\mathbb{C})$,
			\item[(2)] $d_\phi^H(S_\sigma) = d_\psi^K(S_\sigma)$ for every $\sigma \in S_n$,
			\item[(3)] $\sum\limits_{\tau\in[\sigma]}\hat{\phi}(\tau)=\sum\limits_{\tau\in[\sigma]}\hat{\psi}(\tau)$ for every $\sigma\in S_n$.
			\end{enumerate}
	\end{thm}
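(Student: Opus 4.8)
The plan is to prove the cycle of implications $(1) \Rightarrow (2) \Rightarrow (3) \Rightarrow (1)$. The implication $(1) \Rightarrow (2)$ is immediate, since each $S_\sigma$ lies in $\mathbb{S}_n(\mathbb{C})$.

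For $(2) \Rightarrow (3)$ I would invoke Theorem \ref{lem1}, which rewrites $(2)$ as the statement that $\sum_{\tau \in X_\sigma}\hat\phi(\tau) = \sum_{\tau \in X_\sigma}\hat\psi(\tau)$ for every $\sigma \in S_n$, and then peel this down to the equivalence class $[\sigma]$ by induction on the number $l$ of type-II cycles in the decomposition of $\sigma$. When $l = 0$ the permutation is type-I (Proposition \ref{prop1}), so $X_\sigma = [\sigma]$ and the two statements coincide. For the inductive step, Proposition \ref{prop4} presents $X_\sigma$ as a disjoint union of equivalence classes exactly one of which, namely $[\sigma]$, consists of the elements carrying all $l$ type-II cycles, while every other class occurring in $X_\sigma$ carries strictly fewer than $l$ type-II cycles; here one also uses that inverting a cycle preserves its length, hence its type, so the number of type-II cycles is constant on each equivalence class. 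Applying the induction hypothesis to those lower classes and subtracting their (equal) contributions from both sides of $\sum_{\tau \in X_\sigma}\hat\phi(\tau) = \sum_{\tau \in X_\sigma}\hat\psi(\tau)$ leaves $\sum_{\tau \in [\sigma]}\hat\phi(\tau) = \sum_{\tau \in [\sigma]}\hat\psi(\tau)$.

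The conceptual heart is $(3) \Rightarrow (1)$. Writing $w_A(\tau) := \prod_{i=1}^n [A]_{i\,\tau(i)}$, so that $d_\chi^G(A) = \sum_{\tau \in S_n}\hat\chi(\tau)\, w_A(\tau)$ by $(\ref{eq08})$, I would first show that for a symmetric matrix $A$ the weight $w_A$ is constant on each equivalence class of $\sim$. Indeed, if $\sigma = \sigma_1\cdots\sigma_k$ is the decomposition and $\tau = \sigma_1^{n_1}\cdots\sigma_k^{n_k} \in [\sigma]$, then $\operatorname{Fix}(\tau) = \operatorname{Fix}(\sigma)$, and splitting $[n]$ as in Lemma \ref{lem0}(3) one sees that the factor of $w_A(\tau)$ coming from each block $\operatorname{Fix}(\sigma_j)^c$ is the ``cycle product'' of $\sigma_j^{n_j}$; when $n_j = -1$, reindexing $i \mapsto \sigma_j(i)$ on $\operatorname{Fix}(\sigma_j)^c$ and using $A = A^T$ shows this cycle product equals that of $\sigma_j$, so $w_A(\tau) = w_A(\sigma)$. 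Then, partitioning $S_n$ into the equivalence classes $C$ of $\sim$ and choosing a representative $\sigma_C$ of each,
\begin{equation*}
d_\phi^H(A) - d_\psi^K(A) = \sum_{C} w_A(\sigma_C)\left(\sum_{\tau \in C}\hat\phi(\tau) - \sum_{\tau \in C}\hat\psi(\tau)\right) = 0
\end{equation*}
by condition $(3)$, and since $A \in \mathbb{S}_n(\mathbb{C})$ was arbitrary this gives $(1)$.

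I expect the main obstacle to be the inductive step in $(2) \Rightarrow (3)$: one must be careful that the decomposition of $X_\sigma$ furnished by Proposition \ref{prop4} genuinely exhibits $X_\sigma$ as a \emph{disjoint} union of equivalence classes having $[\sigma]$ as the unique ``top'' class (those indexed by the sets $S$ may well overlap with one another, so the relevant fact is Proposition \ref{prop4}(1) together with parts (2)--(3)), which is exactly what legitimizes subtracting the lower contributions; the rest is bookkeeping. The symmetry computation in $(3) \Rightarrow (1)$, by contrast, is short once one notices that inverting a cycle merely reverses the order of the factors in its cycle product, something a symmetric matrix does not detect.
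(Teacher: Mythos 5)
Your proposal is correct and follows essentially the same route as the paper: $(1)\Rightarrow(2)$ trivially, $(2)\Rightarrow(3)$ via Theorem \ref{lem1} together with induction on the number of type-II cycles using the decomposition of $X_\sigma$ from Proposition \ref{prop4}, and $(3)\Rightarrow(1)$ by showing the weight $\prod_{i}[A]_{i\,\tau(i)}$ is constant on each equivalence class when $A$ is symmetric and then regrouping the sum. If anything, your bookkeeping in the inductive step (applying the hypothesis class-by-class to the union $X_\sigma\setminus[\sigma]$) is slightly cleaner than the paper's treatment of the base case.
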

	\begin{proof}
	It is obvious that (1) implies (2). 
		Suppose that (2) is true. 
	Let $\sigma = \sigma_1\cdots \sigma_k$ be a decomposition of $\sigma$. 
		If $\sigma$ is type-I, i.e., $X_\sigma = [\sigma]$, then, by Theorem \ref{lem1},
			\begin{equation*}
			\sum\limits_{\tau\in[\sigma]}\hat{\phi}(\tau)=\sum\limits_{\tau\in X_\sigma}\hat{\phi}(\tau)=d_\phi^H(S_\sigma) = d_\psi^K(S_\sigma)=\sum\limits_{\tau\in X_\sigma}\hat{\psi}(\tau)=\sum\limits_{\tau\in[\sigma]}\hat{\psi}(\tau).
			\end{equation*} 
	Assume that $\sigma$ is type-II. 
		By Proposition \ref{prop1}, the decomposition of $\sigma$ contains at least one type-II cycle. 
	To show that $\sum\limits_{\tau\in[\sigma]}\hat{\phi}(\tau)=\sum\limits_{\tau\in[\sigma]}\hat{\psi}(\tau)$, for each $\sigma\in S_n$, we will use a strong induction on the number of type-II cycle contained in each permutation. 
		Firstly, suppose that $\sigma$ contains only one type-II cycle, without loss of generality, says $\sigma_1 = (a_1 \, \dots \, a_s)$. 
	By applying (\ref{eq01}), we have
			\begin{equation*}
			X_\sigma = [\sigma] \cup S(\sigma),
			\end{equation*}
	where $S(\sigma) = S(\sigma_1)[\sigma_2][\sigma_3]\cdots[\sigma_k]$. 
		By using Theorem \ref{lem1}, we obtain that
			\begin{equation*}
			\sum\limits_{\tau\in X_\sigma}\hat{\phi}(\tau)=d_\phi^H(S_\sigma) = d_\psi^K(S_\sigma)=\sum\limits_{\tau\in X_\sigma}\hat{\psi}(\tau).
			\end{equation*}
	So, by Proposition \ref{prop4}, we can conclude that $[\sigma] \cap S(\sigma) = \emptyset$, and thus,
			\begin{equation*}
			\sum\limits_{\tau\in [\sigma]}\hat{\phi}(\tau)+\sum\limits_{\tau\in S(\sigma)}\hat{\phi}(\tau)=\sum\limits_{\tau\in X_\sigma}\hat{\phi}(\tau)=\sum\limits_{\tau\in X_\sigma}\hat{\psi}(\tau)=\sum\limits_{\tau\in [\sigma]}\hat{\psi}(\tau)+\sum\limits_{\tau\in S(\sigma)}\hat{\psi}(\tau). 
			\end{equation*}
		Since each element in $S(\sigma_1)$ is type-I, every permutation in $S(\sigma)$ is also type-I. 
	Moreover, by Theorem \ref{lem1}, $d_\phi^H(S_\tau) = \hat{\phi}(\tau)$ and $d_\psi^K(S_\tau) = \hat{\psi}(\tau)$, for each $\tau \in S(\sigma)$. 
		As we have already proven that this theorem holds for every type-I permutation, we have
			\begin{equation*}
			\sum\limits_{\tau\in S(\sigma)}\hat{\phi}(\tau) = \sum\limits_{\tau\in S(\sigma)}d_\phi^H(S_\tau) = \sum\limits_{\tau\in S(\sigma)}d_\psi^K(S_\tau) = \sum\limits_{\tau\in S(\sigma)}\hat{\psi}(\tau). 
			\end{equation*}
	This implies that
			\begin{equation*}
			\sum\limits_{\tau\in [\sigma]}\hat{\phi}(\tau)=\sum\limits_{\tau\in [\sigma]}\hat{\psi}(\tau). 
			\end{equation*}
		Because $\sigma$ is arbitrary, the statement of this theorem is true for every permutation containing exactly one type-II cycle in its decomposition. 
	Suppose the hypothesis induction, that is, the decomposition of $\sigma$ contains exactly $l$ type-II cycles and the statement holds for each permutation containing less than $l$ type-II cycles. 
		By (\ref{eq01}),
			\begin{equation*}
			X_\sigma = [\sigma] \cup X,
			\end{equation*}
		where $X = (\bigcup\limits_{j = 1}^l \bigcup\limits_{(S) \in Seq(j)} X_\sigma^S)$. 
	By applying Proposition \ref{prop4} and Theorem \ref{lem1} again, we have $[\sigma] \cap X = \emptyset$ and
			\begin{equation*}
			\sum\limits_{\tau\in [\sigma]}\hat{\phi}(\tau)+\sum\limits_{\tau\in X}\hat{\phi}(\tau)=\sum\limits_{\tau\in X_\sigma}\hat{\phi}(\tau)=\sum\limits_{\tau\in X_\sigma}\hat{\psi}(\tau)=\sum\limits_{\tau\in [\sigma]}\hat{\psi}(\tau)+\sum\limits_{\tau\in X}\hat{\psi}(\tau). 
			\end{equation*}
		By Proposition \ref{prop4}(3), $[\sigma]$ is the set of all permutations in $X_\sigma$ containing $l$ type-II cycles in its decomposition, that is, the decomposition of each permutation in $X$ containing less than $l$ type-II cycles. 
	By Proposition \ref{prop4}, $X$ is the union of the equivalence classes of elements in $X_\sigma \setminus [\sigma]$. 
		Assume that $\sigma_j = (a_1^j \, \dots \, a_{s_j}^j)$ for every $i \in [l]$. 
	For each $(S) = (x_1, \dots,  x_j)$, due to (\ref{eq10}),
			\begin{equation}\label{eq11}
			X_\sigma^S = \bigcup\limits_{(S_{x_1}, \dots, S_{x_j}) \in \mathbb{I}}[\prod\limits_{i \in S}(\sigma_i)_{(S_i)}\prod\limits_{i \not\in S}\sigma_i],
			\end{equation} 
	where $\mathbb{I} = \{a_1^{x_1}, a_{s_{x_1}}^{x_1}\} \times \cdots \times \{a_1^{x_j}, a_{s_{x_j}}^{x_j}\}$. 
		Since $\prod\limits_{i \in S}(\sigma_i)_{(S_i)}\prod\limits_{i \not\in S}\sigma_i \in X$, its decomposition contains less than $l$ type-II cycles.
	Because of the hypothesis of the induction, we have 
			\begin{equation*}
			\sum\limits_{\tau\in [\prod\limits_{i \in S}(\sigma_i)_{(S_i)}\prod\limits_{i \not\in S}\sigma_i]}\hat{\phi}(\tau)=\sum\limits_{\tau\in [\prod\limits_{i \in S}(\sigma_i)_{(S_i)}\prod\limits_{i \not\in S}\sigma_i]}\hat{\psi}(\tau). 
			\end{equation*}
		Thus, by the definition of $X$ and (\ref{eq11}), we obtain that 
			\begin{equation*}
			\sum\limits_{\tau\in X}\hat{\phi}(\tau)=\sum\limits_{\tau\in X}\hat{\psi}(\tau). 
			\end{equation*}
	This implies that
			\begin{equation*}
			\sum\limits_{\tau\in [\sigma]}\hat{\phi}(\tau)=\sum\limits_{\tau\in [\sigma]}\hat{\psi}(\tau). 
			\end{equation*}
		The induction is done, and hence the statement is true for every permutation in $S_n$.
		
	Now, suppose that (3) occurs. 
		Let $A$ be a symmetric matrix, $\sigma=\sigma_1\cdots\sigma_k$ the decomposition of $\sigma$ and $\omega \in [\sigma]$. 
	Then $\omega = \sigma_1^{n_1}\cdots\sigma_k^{n_k}$, where $n_1,\dots,n_k \in \{1,-1\}$. 
		Claim that $\prod\limits_{i=1}^n [A]_{i \, \sigma(i)} = \prod\limits_{i=1}^n [A]_{i \, \omega(i)}$. 
	In order to prove that (1) holds, we consider $\sigma_j$, for each $1\leq j \leq k$. 
		If $n_j = 1$, then $\sigma(i) = \sigma_j(i) = \omega(i)$ for each $i \in \operatorname{Fix}(\sigma_j)^c$, which implies that
			\begin{equation*}
			\prod\limits_{i \in \operatorname{Fix}(\sigma_j)^c}[A]_{i \, \sigma(i)} = \prod\limits_{i \in \operatorname{Fix}(\sigma_j)^c}[A]_{i \, \omega(i)}.
			\end{equation*} 
	Suppose that $n_j = -1$, that is, $\sigma^{-1}(i) = \sigma_j^{-1}(i) = \omega(i)$ for every $i \in \operatorname{Fix}(\sigma_j)^c$.
		 We obtain that
			\begin{eqnarray*}
			\prod\limits_{i \in \operatorname{Fix}(\sigma_j)^c}[A]_{i \, \sigma(i)}
			& = & \prod\limits_{i \in \operatorname{Fix}(\sigma_j)^c}[A]_{\sigma(i) \, i}\\ 
			& = & \prod\limits_{i \in \operatorname{Fix}(\sigma_j)^c}[A]_{i \, \sigma^{-1}(i)}\\ 
			& = & \prod\limits_{i \in \operatorname{Fix}(\sigma_j)^c}[A]_{i \, \omega(i)}.
			\end{eqnarray*}
	By Lemma \ref{lem0}(3),
			\begin{equation*}
			[n] = \operatorname{Fix}(\sigma) \cup \operatornamewithlimits{Fix}(\sigma_1)^c \cup \operatornamewithlimits{Fix}(\sigma_2)^c \cup \cdots \cup \operatornamewithlimits{Fix}(\sigma_k)^c.
			\end{equation*}
		By recognizing that $\operatorname{Fix}(\sigma) = \operatorname{Fix}(\omega)$, we have
			\begin{eqnarray*}
			\prod\limits_{i=1}^n [A]_{i \, \sigma(i)} 
			&=& \prod\limits_{j = 1}^k\prod\limits_{i \in \operatorname{Fix}(\sigma_j)^c}[A]_{i \, \sigma(i)}\prod\limits_{i \in \operatorname{Fix}(\sigma)}[A]_{i \, \sigma(i)}\\
			&=& \prod\limits_{j = 1}^k\prod\limits_{i \in  \operatorname{Fix}(\sigma_j)^c}[A]_{i \, \omega(i)}\prod\limits_{i \in \operatorname{Fix}(\sigma)}[A]_{i \, \omega(i)}\\ 
			&=& \prod\limits_{i=1}^n [A]_{i \, \omega(i)}.
			\end{eqnarray*} 
	The claim is true. 
		Now, let $[\tau_1],\dots,[\tau_s]$ be all distinct equivalence classes of $S_n$. 
	Then, by the fact above, we can conclude that
			\begin{eqnarray*}
			d_\phi^H(A)
			& = & \sum\limits_{\sigma \in S_n} \hat{\phi}(\sigma) \prod\limits_{i = 1}^n [A]_{i \, \sigma(i)}\\
			& = & \sum\limits_{j = 1}^k\sum\limits_{\sigma \in [\tau_j]} \hat{\phi}(\sigma) \prod\limits_{i = 1}^n [A]_{i \, \sigma(i)}\\
			& = & \sum\limits_{j = 1}^k\sum\limits_{\sigma \in [\tau_j]} \hat{\phi}(\sigma) \prod\limits_{i = 1}^n [A]_{i \, \tau_j(i)}\\
			& = & \sum\limits_{j = 1}^k(\sum\limits_{\sigma \in [\tau_j]} \hat{\phi}(\sigma)) \prod\limits_{i = 1}^n [A]_{i \, \tau_j(i)}\\
			& = & \sum\limits_{j = 1}^k(\sum\limits_{\sigma \in [\tau_j]} \hat{\psi}(\sigma)) \prod\limits_{i = 1}^n [A]_{i \, \tau_j(i)}\\
			& = & \sum\limits_{j = 1}^k\sum\limits_{\sigma \in [\tau_j]} \hat{\psi}(\sigma) \prod\limits_{i = 1}^n [A]_{i \,\tau_j(i)}\\
			& = & \sum\limits_{j = 1}^k\sum\limits_{\sigma \in [\tau_j]} \hat{\psi}(\sigma) \prod\limits_{i = 1}^n [A]_{i \, \sigma(i)}\\
			& = & \sum\limits_{\sigma \in S_n} \hat{\psi}(\sigma) \prod\limits_{i = 1}^n [A]_{i \, \sigma(i)}\\
			& = & d_\psi^K(A).		
			\end{eqnarray*}
		This proves (1). 
	\end{proof}
	
	\begin{cor}
	Let $H$ and $K$ be subgroups of $S_n$ and $\phi$ and $\psi$ complex valued functions of $H$ and $K$, respectively. 
		Suppose that $Y$ is a subset of $\mathbb{S}_n(\mathbb{C})$ containing $S_\sigma$ for every $\sigma \in S_n$. 
	Then $d_\phi^H(A) = d_\psi^K(A)$ for each $A \in Y$ if and only if $d_\phi^H(S_\sigma) = d_\psi^K(S_\sigma)$ for each $\sigma \in S_n$.
	\end{cor}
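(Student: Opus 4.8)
The plan is to obtain this statement as an immediate consequence of Theorem \ref{thm1}, exploiting that $Y$ is squeezed between the family of test matrices $\{S_\sigma \mid \sigma \in S_n\}$ and the full cone $\mathbb{S}_n(\mathbb{C})$.

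First I would dispatch the forward implication. Assume $d_\phi^H(A) = d_\psi^K(A)$ for every $A \in Y$. By hypothesis $S_\sigma \in Y$ for each $\sigma \in S_n$, so specializing $A = S_\sigma$ yields $d_\phi^H(S_\sigma) = d_\psi^K(S_\sigma)$ for every $\sigma \in S_n$, which is the desired conclusion. For the converse, suppose $d_\phi^H(S_\sigma) = d_\psi^K(S_\sigma)$ for every $\sigma \in S_n$; this is exactly condition (2) in Theorem \ref{thm1}. Applying the implication $(2)\Rightarrow(1)$ of that theorem gives $d_\phi^H(A) = d_\psi^K(A)$ for every $A \in \mathbb{S}_n(\mathbb{C})$. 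Since $Y \subseteq \mathbb{S}_n(\mathbb{C})$ by assumption, restricting this equality to $A \in Y$ finishes the argument.

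I do not expect any genuine obstacle: all the substance is already packaged in Theorem \ref{thm1}, and the corollary merely records that \emph{any} intermediate set between the matrices $S_\sigma$ and $\mathbb{S}_n(\mathbb{C})$ already detects equality of two generalized matrix functions on $\mathbb{S}_n(\mathbb{C})$. The only points requiring a modicum of care are invoking the correct direction of the three-way equivalence and noting explicitly that the inclusions $\{S_\sigma\} \subseteq Y \subseteq \mathbb{S}_n(\mathbb{C})$ are built into the hypotheses.
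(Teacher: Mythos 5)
Your proof is correct and is exactly the intended argument: the paper states this corollary without proof as an immediate consequence of Theorem \ref{thm1}, and your two observations (specialize to $A = S_\sigma \in Y$ for the forward direction; apply $(2)\Rightarrow(1)$ of Theorem \ref{thm1} and restrict from $\mathbb{S}_n(\mathbb{C})$ to $Y$ for the converse) are precisely what is needed.
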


	By Theorem 2.2. in \cite{2014jm2}, any two generalized matrix functions $d_\phi^H$ and $d_\psi^K$ are equal on the set of all nonsingular matrices if and only if $\hat{\phi} = \hat{\psi}$. 
		Next corollary is obtained immediately by using this fact.
	
	\begin{cor}
	Let $H$ and $K$ be subgroups of $S_n$. 
		Suppose that $\phi$ and $\psi$ be complex valued functions of $H$ and $K$, respectively. 
	If $d_\phi^H(A) = d_\psi^K(A)$ for every nonsingular matrix $A$, then $d_\phi^H(A) = d_\psi^K(A)$ for every $A\in \mathbb{S}_n(\mathbb{C})$.
	\end{cor}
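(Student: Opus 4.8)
The plan is to deduce this corollary by composing the cited result Theorem 2.2 of \cite{2014jm2} with Theorem \ref{thm1} above, rather than by any fresh matrix computation. The hypothesis is precisely that $d_\phi^H$ and $d_\psi^K$ agree on the set of all nonsingular matrices, which is exactly the setting of Theorem 2.2 of \cite{2014jm2}; so the first step is simply to invoke that theorem to conclude $\hat{\phi} = \hat{\psi}$ as complex valued functions on $S_n$ (recall $\hat{\phi}$ is the extension of $\phi$ vanishing off $H$, and similarly for $\hat{\psi}$, as set up around \eqref{eq08}).

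The second step is to feed this into Theorem \ref{thm1}. From $\hat{\phi} = \hat{\psi}$ one gets, for every $\sigma \in S_n$, that
\begin{equation*}
\sum_{\tau \in [\sigma]} \hat{\phi}(\tau) = \sum_{\tau \in [\sigma]} \hat{\psi}(\tau),
\end{equation*}
since both sides are finite sums over the same equivalence class $[\sigma]$ with identical summands. This is exactly condition (3) of Theorem \ref{thm1}. Invoking the implication (3) $\Rightarrow$ (1) of that theorem then yields $d_\phi^H(A) = d_\psi^K(A)$ for every $A \in \mathbb{S}_n(\mathbb{C})$, which is the assertion of the corollary.

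There is no real obstacle in this argument; all the work has already been absorbed into Theorem \ref{thm1} and into the cited theorem. The only point that deserves a line of care is making sure the quoted statement of Theorem 2.2 of \cite{2014jm2} is applied with the correct domain: it characterizes equality on the nonsingular matrices by $\hat{\phi} = \hat{\psi}$ (and, as noted in the introduction, the same conclusion in fact holds from equality on all of $M_n(\mathbb{C})$), so our hypothesis is neither stronger nor weaker than what is needed. Hence the proof is essentially a two-sentence chain: nonsingular equality $\Rightarrow \hat{\phi} = \hat{\psi} \Rightarrow$ condition (3) $\Rightarrow$ symmetric equality.
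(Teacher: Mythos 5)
Your proposal is correct and matches the paper's intent: the corollary is derived immediately from Theorem 2.2 of \cite{2014jm2}, which gives $\hat{\phi}=\hat{\psi}$ from equality on the nonsingular matrices. Your detour through condition (3) of Theorem \ref{thm1} is valid but slightly longer than necessary, since $\hat{\phi}=\hat{\psi}$ already forces $d_\phi^H=d_\psi^K$ on all of $M_n(\mathbb{C})$ directly from the defining formula \eqref{eq08}, hence in particular on $\mathbb{S}_n(\mathbb{C})$.
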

	
	We set $H = S_n = K$. 
		By applying Theorem \ref{thm1}, the results when $\phi$ and $\psi$ are characters are verified.
	
	\begin{cor}
	Let $\phi$ and $\psi$ be characters of $S_n$. 
		Then $d_\phi^{S_n}(A) = d_\psi^{S_n}(A)$ for every $A \in \mathbb{S}_n(\mathbb{C})$ if and only if $\phi = \psi$.
	\end{cor}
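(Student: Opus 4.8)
The plan is to deduce this corollary directly from Theorem \ref{thm1} applied with $H = K = S_n$, in which case the extensions $\hat{\phi}$ and $\hat{\psi}$ coincide with $\phi$ and $\psi$ themselves. The forward implication is trivial: if $\phi = \psi$, then $d_\phi^{S_n}$ and $d_\psi^{S_n}$ agree on all of $M_n(\mathbb{C})$, in particular on $\mathbb{S}_n(\mathbb{C})$.

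For the converse, suppose $d_\phi^{S_n}(A) = d_\psi^{S_n}(A)$ for every $A \in \mathbb{S}_n(\mathbb{C})$. By the equivalence of (1) and (3) in Theorem \ref{thm1}, this yields
\begin{equation*}
\sum_{\tau \in [\sigma]} \phi(\tau) = \sum_{\tau \in [\sigma]} \psi(\tau) \quad \text{for every } \sigma \in S_n.
\end{equation*}
The key observation is that every element of $[\sigma]$ is conjugate to $\sigma$ in $S_n$: if $\sigma = \sigma_1 \cdots \sigma_k$ is the decomposition of $\sigma$ and $\tau = \sigma_1^{n_1} \cdots \sigma_k^{n_k}$ with $n_1, \dots, n_k \in \{1, -1\}$, then each $\sigma_i^{n_i}$ is a cycle of the same length as $\sigma_i$ and these cycles are still pairwise disjoint, so $\tau$ has the same cycle type as $\sigma$.

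Since $\phi$ and $\psi$ are characters, they are class functions, hence constant on $[\sigma]$. Writing $m = \abs{[\sigma]} \geq 1$ (the class is nonempty since $\sigma \in [\sigma]$), the displayed identity becomes $m\,\phi(\sigma) = m\,\psi(\sigma)$, so $\phi(\sigma) = \psi(\sigma)$; as $\sigma$ was arbitrary, $\phi = \psi$. There is essentially no obstacle beyond recognizing that the blocks $[\sigma]$ over which Theorem \ref{thm1} forces agreement of the averaged values lie inside single conjugacy classes, so that the class-function hypothesis immediately upgrades ``equal sums over $[\sigma]$'' to ``equal values at $\sigma$''.
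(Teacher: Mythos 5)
Your proposal is correct and follows essentially the same route as the paper: apply the equivalence $(1)\Leftrightarrow(3)$ of Theorem \ref{thm1} with $H=K=S_n$, observe that each class $[\sigma]$ lies inside a single conjugacy class (since inverting some of the disjoint cycles preserves the cycle type), and use that characters are class functions to collapse the sum over $[\sigma]$ to $\abs{[\sigma]}\,\phi(\sigma)=\abs{[\sigma]}\,\psi(\sigma)$. You merely spell out the conjugacy observation that the paper asserts without proof, which is a harmless (indeed helpful) addition.
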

	\begin{proof}
	Consider the equivalence class in $S_n$. We can see that each equivalence class is a subset of a conjugacy class. 
		Since any character is a class function, for each $\sigma\in S_n$, we have $\phi(\tau) = \phi(\sigma)$ and $\psi(\tau) = \psi(\sigma)$ for every $\tau \in [\sigma]$. 
	By Theorem \ref{thm1}, this corollary holds.
	\end{proof}
	
	\begin{thm} 
	Let $G \leq S_n$ and $\chi$ be a complex valued function of $G$. 
		Then the following are equivalent.
			\begin{enumerate}
			\item[(1)] $d_\chi^G(A) = d_\chi^G(A^T)$ for each $A \in M_n(\mathbb{C})$,
			\item [(2)] $d_\chi^G(AB) = d_\chi^G(BA)$ for each $A, B \in \mathbb{S}_n(\mathbb{C})$,
			\item[(3)] $\chi(\sigma) = \chi(\sigma^{-1})$ for each $\sigma \in S_n$.
			\end{enumerate}
	\end{thm}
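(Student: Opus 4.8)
The plan is to establish the cycle $(3)\Rightarrow(1)\Rightarrow(2)\Rightarrow(1)\Rightarrow(3)$: the first and last implications together give $(1)\Leftrightarrow(3)$, while $(1)\Rightarrow(2)\Rightarrow(1)$ gives $(1)\Leftrightarrow(2)$. Everything rests on one reindexing identity, which I would record first: for every $A\in M_n(\mathbb{C})$,
\begin{align*}
d_\chi^G(A^T)
&=\sum_{\sigma\in S_n}\hat\chi(\sigma)\prod_{i=1}^n[A^T]_{i\,\sigma(i)}
=\sum_{\sigma\in S_n}\hat\chi(\sigma)\prod_{i=1}^n[A]_{\sigma(i)\,i}\\
&=\sum_{\tau\in S_n}\hat\chi(\tau^{-1})\prod_{i=1}^n[A]_{i\,\tau(i)},
\end{align*}
obtained by reindexing the product via $i\mapsto\tau^{-1}(i)$ and then renaming $\tau=\sigma^{-1}$. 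Comparing $\hat\chi(\tau)$ with $\hat\chi(\tau^{-1})$ is legitimate since $G$ is a subgroup, hence closed under inversion, so $\hat\chi(\tau^{-1})$ is nonzero exactly when $\hat\chi(\tau)$ is.

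Granting this identity, $(3)\Rightarrow(1)$ is immediate: if $\chi(\sigma)=\chi(\sigma^{-1})$ for every $\sigma$, then $\hat\chi(\tau^{-1})=\hat\chi(\tau)$ for all $\tau\in S_n$, so the right-hand side above is exactly $d_\chi^G(A)$. For $(1)\Rightarrow(2)$ I would take $A,B\in\mathbb{S}_n(\mathbb{C})$, recall the preliminary fact $(AB)^T=BA$, and apply $(1)$ to the matrix $AB$ to obtain $d_\chi^G(AB)=d_\chi^G((AB)^T)=d_\chi^G(BA)$. For $(2)\Rightarrow(1)$ I would invoke Bosch's factorization \cite{1986b}: any $M\in M_n(\mathbb{C})$ can be written as $M=AB$ with $A,B\in\mathbb{S}_n(\mathbb{C})$; then $M^T=(AB)^T=BA$, and $(2)$ yields $d_\chi^G(M)=d_\chi^G(AB)=d_\chi^G(BA)=d_\chi^G(M^T)$.

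For $(1)\Rightarrow(3)$ I would test condition $(1)$ on permutation matrices. Fix $\tau\in S_n$; since $\prod_{i=1}^n[P_\tau]_{i\,\sigma(i)}$ equals $1$ if $\sigma=\tau$ and $0$ otherwise, we get $d_\chi^G(P_\tau)=\hat\chi(\tau)$, and because $P_\tau^T=P_{\tau^{-1}}$ we likewise get $d_\chi^G(P_\tau^T)=\hat\chi(\tau^{-1})$. Hence $(1)$ forces $\hat\chi(\tau)=\hat\chi(\tau^{-1})$ for all $\tau\in S_n$, which reads $\chi(\sigma)=\chi(\sigma^{-1})$ for $\sigma\in G$ and is the trivial identity $0=0$ for $\sigma\notin G$, giving $(3)$.

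The one step that is not pure bookkeeping is $(2)\Rightarrow(1)$: it relies entirely on Bosch's theorem that an arbitrary complex square matrix is a product of two symmetric matrices. The care needed there is to quote that result in the form valid over $\mathbb{C}$ with no extra hypotheses and to confirm that the two factors it produces are genuinely symmetric; once that is in hand the rest of the argument is elementary manipulation of the reindexing identity and of permutation matrices.
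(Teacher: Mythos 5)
Your proof is correct and uses the same ingredients as the paper's: the reindexing identity for $(3)\Rightarrow(1)$, the identity $(AB)^T=BA$ for $(1)\Rightarrow(2)$, and Bosch's factorization together with evaluation of $d_\chi^G$ on permutation matrices to close the cycle. The only difference is cosmetic: the paper proves $(2)\Rightarrow(3)$ in a single step by factoring $P_\sigma$ itself into two symmetric matrices, whereas you split this into $(2)\Rightarrow(1)$ (Bosch applied to an arbitrary matrix) followed by $(1)\Rightarrow(3)$ (evaluation on permutation matrices).
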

	\begin{proof}
	Since $(AB)^T = BA$ for every symmetric matrices $A$ and $B$, (1) implies (2). 
		Suppose that (2) is true. Let $\sigma \in S_n$. 
	There exist symmetric matrices $A$ and $B$ such that $P_\sigma = AB$, where $P_\sigma$ is the permutation matrix corresponding to $\sigma$. 
		So $P_{\sigma^{-1}} = P_\sigma^T = BA$. 
	By the assertion,
			\begin{equation*}
			\chi(\sigma) = d_\chi^G(P_\sigma) = d_\chi^G(P_{\sigma^{-1}}) = \chi(\sigma^{-1}).
			\end{equation*}
		
	Suppose that (3) holds, that is, $\chi(\sigma) = \chi(\sigma^{-1})$ for every $\sigma \in S_n$. Let $A \in M_n(\mathbb{C})$. 
		Then 
			\begin{eqnarray*}
			d_\chi^G(A) 
			& = &\sum\limits_{\sigma \in G}\chi(
			\sigma)\prod\limits_{i=1}^n [A]_{i \, \sigma(i)}\\
			& = &\sum\limits_{\sigma \in G}\chi(
			\sigma)\prod\limits_{i=1}^n [A]_{\sigma^{-1} \, (i) i}\\
			& = &\sum\limits_{\sigma \in G}\chi(
			\sigma^{-1})\prod\limits_{i=1}^n [A]_{\sigma(i) \, i}\\
			& = &\sum\limits_{\sigma \in G}\chi(
			\sigma)\prod\limits_{i=1}^n [A]_{\sigma(i) \, i}\\
			& = & d_\chi^G(A^T),
			\end{eqnarray*}
		which completes the proof.
	\end{proof}

	Since a character is a class(conjugacy class) function, next corollaries are obtained immediately.
	
	\begin{cor}
	Let $G \leq S_n$ and $\chi$ be a character of $G$. 
		Then the following are equivalent.
			\begin{enumerate}
			\item[(1)] $d_\chi^G(A) = d_\chi^G(A^T)$ for each $A \in M_n(\mathbb{C})$,
			\item [(2)] $d_\chi^G(AB) = d_\chi^G(BA)$ for each $A, B \in \mathbb{S}_n(\mathbb{C})$,
			\item[(3)] $\chi$ is a real valued function.
			\end{enumerate}
	\end{cor}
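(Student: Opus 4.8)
The plan is to deduce this corollary directly from the theorem that precedes it, which already establishes that (1) $\Leftrightarrow$ (2) and that both are equivalent to the condition
\begin{equation*}
\chi(\sigma) = \chi(\sigma^{-1}) \quad \text{for each } \sigma \in S_n .
\end{equation*}
(As throughout, $\chi$ is tacitly identified with its extension $\hat{\chi}$ that vanishes off $G$; since $G \le S_n$ is a subgroup it is closed under inversion, so for $\sigma \notin G$ this condition is the trivial $0 = 0$, and the real content is $\chi(\sigma) = \chi(\sigma^{-1})$ for $\sigma \in G$.) Hence the only thing left to prove is that, when $\chi$ is a character of the finite group $G$, the equality $\chi(\sigma) = \chi(\sigma^{-1})$ for all $\sigma \in G$ is equivalent to $\chi$ being a real valued function.

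For this I would invoke the standard fact from the representation theory of finite groups that a character satisfies $\chi(\sigma^{-1}) = \overline{\chi(\sigma)}$, and I would include its short justification: by definition $\chi(\sigma) = \operatorname{tr}(\varphi_\sigma)$ for a homomorphism $\varphi : G \to GL_m(\mathbb{C})$; since $G$ is finite, $\varphi_\sigma$ has finite multiplicative order and is therefore diagonalizable with eigenvalues $\lambda_1, \dots, \lambda_m$ that are roots of unity. Then $\varphi_{\sigma^{-1}} = \varphi_\sigma^{-1}$ has eigenvalues $\lambda_1^{-1}, \dots, \lambda_m^{-1}$, and for any root of unity $\lambda$ we have $\lambda^{-1} = \overline{\lambda}$ (because $|\lambda| = 1$), so $\chi(\sigma^{-1}) = \sum_{j=1}^m \lambda_j^{-1} = \sum_{j=1}^m \overline{\lambda_j} = \overline{\chi(\sigma)}$.

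Combining the two observations: $\chi(\sigma) = \chi(\sigma^{-1})$ for every $\sigma$ if and only if $\chi(\sigma) = \overline{\chi(\sigma)}$ for every $\sigma$, i.e.\ if and only if $\chi(\sigma) \in \mathbb{R}$ for every $\sigma$, which is precisely the assertion that $\chi$ is real valued. This turns condition (3) of the theorem into condition (3) of the corollary and closes the chain (1) $\Leftrightarrow$ (2) $\Leftrightarrow$ (3). I do not expect any serious obstacle; the only points deserving a moment of care are that the clause ``for each $\sigma \in S_n$'' legitimately reduces to a statement about $G$, and that the identity $\chi(\sigma^{-1}) = \overline{\chi(\sigma)}$ genuinely uses the finiteness of $G$ (automatic here since $G \le S_n$), without which the eigenvalues of $\varphi_\sigma$ need not be roots of unity.
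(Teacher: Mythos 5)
Your proof is correct and follows the same route the paper intends: the corollary is deduced from the preceding theorem, with the only new ingredient being the standard identity $\chi(\sigma^{-1}) = \overline{\chi(\sigma)}$ for characters of finite groups, which converts the theorem's condition $\chi(\sigma) = \chi(\sigma^{-1})$ into real-valuedness. The paper declares the corollary ``obtained immediately''; your justification of $\chi(\sigma^{-1}) = \overline{\chi(\sigma)}$ via roots of unity, and your observation that the clause for $\sigma \notin G$ is vacuous because $G$ is closed under inversion, supply exactly the details being elided.
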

	
	\begin{cor}
	Let $\chi$ be a character of $S_n$. 
		Then $d_\chi^{S_n}(AB) = d_\chi^{S_n}(BA)$ for every $A,B \in \mathbb{S}_n(\mathbb{C})$.
	\end{cor}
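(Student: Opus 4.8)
The plan is to deduce this from the immediately preceding results, so that the only real work is a standard fact about the symmetric group. By the preceding theorem applied with $G = S_n$, the equality $d_\chi^{S_n}(AB) = d_\chi^{S_n}(BA)$ for all $A, B \in \mathbb{S}_n(\mathbb{C})$ is equivalent to condition (3) there, namely $\chi(\sigma) = \chi(\sigma^{-1})$ for every $\sigma \in S_n$. Thus it suffices to check this last identity for an arbitrary character $\chi$ of $S_n$, and the statement of the corollary is really just recording that this condition is automatic in $S_n$.

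To that end I would use two ingredients. First, a character is a class function, so $\chi$ takes the same value on conjugate permutations. Second, every permutation $\sigma \in S_n$ is conjugate to its inverse: taking the decomposition of $\sigma$ into disjoint cycles and reversing each cycle produces the decomposition of $\sigma^{-1}$, so $\sigma$ and $\sigma^{-1}$ have the same cycle type, and cycle type determines the conjugacy class in $S_n$. Combining the two gives $\chi(\sigma) = \chi(\sigma^{-1})$ for all $\sigma \in S_n$, which is precisely condition (3) of the preceding theorem, and the corollary follows at once.

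I expect no genuine obstacle here: the argument collapses to the elementary observation that a permutation is conjugate to its inverse in $S_n$, together with the already-established equivalence. Equivalently, one could route the argument through the preceding corollary instead: since $\chi(\sigma^{-1}) = \overline{\chi(\sigma)}$ for any character and $\chi(\sigma) = \chi(\sigma^{-1})$ by the class-function argument, $\chi$ is real valued, and then that corollary's equivalence $(2) \Leftrightarrow (3)$ gives the claim — this also makes visible the well-known fact that every character of $S_n$ is real (indeed integer) valued.
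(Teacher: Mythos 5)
Your proof is correct and takes essentially the same route as the paper, which dispatches this corollary with the one-line remark that a character is a class function; you simply make explicit the key fact that every $\sigma \in S_n$ is conjugate to $\sigma^{-1}$ (same cycle type), so condition (3) of the preceding theorem holds automatically. Nothing further is needed.
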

	
	For each $\sigma \in S_n$, define $C_\sigma : [n] \times [n] \rightarrow \{0, 1\}$ by 
			\begin{equation*}
			C_\sigma(i,j) = \left\{
			\begin{array}{ll}
			1, & \hbox{if $i,j \in \operatorname{Fix}(\sigma)^c$,} \\
			0, & \hbox{otherwise.}
			\end{array}
			\right.
			\end{equation*}
	
	\begin{lem}\label{lem3} 
	Let $\sigma = (a_1 \, a_2 \, a_3) \in S_n$. 
		Then
			\begin{equation*}
			[S_\sigma^2]_{i \, j} = C_\sigma(i, j) + \delta_{ij},
			\end{equation*}
		where $\delta_{ij}$ is the Kronecker delta function.
	\end{lem}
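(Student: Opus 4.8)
The plan is to evaluate $[S_\sigma^2]_{ij}=\sum_{k=1}^{n}[S_\sigma]_{ik}[S_\sigma]_{kj}$ directly from the definition of $S_\sigma$, recognise the sum as the cardinality of an intersection of two small sets, and then finish by a short case analysis on whether $i$ and $j$ lie in $\operatorname{Fix}(\sigma)$.

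First I would translate the sum into a count. Since $S_\sigma$ is symmetric with entries in $\{0,1\}$, we have $[S_\sigma]_{ik}=1$ precisely when $k\in\{\sigma(i),\sigma^{-1}(i)\}$ and $[S_\sigma]_{kj}=[S_\sigma]_{jk}=1$ precisely when $k\in\{\sigma(j),\sigma^{-1}(j)\}$; hence every summand is $0$ or $1$, and the sum equals the number of indices $k$ belonging to both sets:
\begin{equation*}
[S_\sigma^2]_{ij}=\bigl|\{\sigma(i),\sigma^{-1}(i)\}\cap\{\sigma(j),\sigma^{-1}(j)\}\bigr|.
\end{equation*}
Next I would record two descriptions of $\{\sigma(i),\sigma^{-1}(i)\}$ for the $3$-cycle $\sigma=(a_1\,a_2\,a_3)$: if $i\in\operatorname{Fix}(\sigma)$ it is the singleton $\{i\}$, while if $i\in\operatorname{Fix}(\sigma)^c=\{a_1,a_2,a_3\}$ then $i$, $\sigma(i)$, $\sigma^{-1}(i)$ are three distinct points (distinctness of $\sigma(i)$ and $\sigma^{-1}(i)$ uses $\sigma^2\neq id$), so $\{\sigma(i),\sigma^{-1}(i)\}=\operatorname{Fix}(\sigma)^c\setminus\{i\}$.

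With these facts the verification splits into the obvious cases. If at least one of $i,j$ lies in $\operatorname{Fix}(\sigma)$, then either both do, so the intersection is $\{i\}\cap\{j\}$ of size $\delta_{ij}$, or exactly one does, so one of the two sets lies in $\operatorname{Fix}(\sigma)$ and the other in $\operatorname{Fix}(\sigma)^c$, making them disjoint and $\delta_{ij}=0$; in either subcase $C_\sigma(i,j)=0$, so $[S_\sigma^2]_{ij}=\delta_{ij}=C_\sigma(i,j)+\delta_{ij}$. If instead $i,j\in\operatorname{Fix}(\sigma)^c$, then
\begin{equation*}
[S_\sigma^2]_{ij}=\bigl|(\operatorname{Fix}(\sigma)^c\setminus\{i\})\cap(\operatorname{Fix}(\sigma)^c\setminus\{j\})\bigr|=\bigl|\operatorname{Fix}(\sigma)^c\setminus\{i,j\}\bigr|,
\end{equation*}
which is $2$ if $i=j$ and $1$ if $i\neq j$, i.e. $1+\delta_{ij}=C_\sigma(i,j)+\delta_{ij}$ since $C_\sigma(i,j)=1$ here. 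Assembling the cases gives the claimed identity for all $i,j\in[n]$.

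I do not expect any genuine obstacle: the only step that really uses the hypothesis rather than formal manipulation is the case $i,j\in\operatorname{Fix}(\sigma)^c$, where one needs that a $3$-cycle has support of size exactly $3$, so that $\{\sigma(i),\sigma^{-1}(i)\}$ is the full two-element complement of $\{i\}$ inside the support (for a longer cycle this fails). Alternatively one could note that $S_\sigma$ is, after reindexing, the block-diagonal matrix $I\oplus(J-I)$, where $J$ is the $3\times 3$ all-ones matrix, and use $(J-I)^2=J+I$; I would keep the entrywise count above since it is self-contained and introduces no new notation.
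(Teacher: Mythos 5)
Your proof is correct and follows essentially the same route as the paper: expand $[S_\sigma^2]_{ij}=\sum_k [S_\sigma]_{ik}[S_\sigma]_{kj}$ and case-split on whether $i$ and $j$ lie in $\operatorname{Fix}(\sigma)$ and whether $i=j$. Your reformulation of the sum as $\bigl|\{\sigma(i),\sigma^{-1}(i)\}\cap\{\sigma(j),\sigma^{-1}(j)\}\bigr|$ merely compresses the paper's six cases into three, and you correctly isolate the one point where the $3$-cycle hypothesis is genuinely used.
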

	\begin{proof}
	By the definition of $S_\sigma$, we have $[S_\sigma]_{i \, j} \in \{0, 1\}$ and $[S_\sigma]_{i \, j} = 1$ if and only if $j = \sigma(i)$ or $j = \sigma^{-1}(i)$ for each $i, j \in [n]$. 
		Note that
			\begin{equation}\label{eq06}
			[S_\sigma^2]_{i \, j} = \sum\limits_{k=1}^n[S_\sigma]_{i \, k}[S_\sigma]_{k \, j}.
			\end{equation}
	We consider $[S_\sigma]_{i \, j}$, where $i, j \in [n]$, as six possible cases.
		By applying the equation $(\ref{eq06})$ in each case, we can verify this lemma as below. 
		
	Case 1. $i,j \in \operatorname{Fix}(\sigma)$ and $i = j$. 
		Then $C_\sigma(i, j) = 0$ and hence
			\begin{equation*}
			[S_\sigma^2]_{i \, j} = [S_\sigma^2]_{i \, i} = ([S_\sigma]_{i \, i})^2 = 1 = C_\sigma(i, j) + \delta_{ij}.
			\end{equation*}
		
	Case 2. $i,j \in \operatorname{Fix}(\sigma)$ but $i \neq j$. 
		So, we obtain that $[S_\sigma]_{i \, k} = 0$ for each $k \in \{1 \dots, n\} \setminus \{i\}$, which implies that $[S_\sigma]_{i \, j} = 0$. 
	Thus
			\begin{equation*}
			[S_\sigma^2]_{i \, j} = [S_\sigma]_{i \, i}[S_\sigma]_{i \, j} = 0 = C_\sigma(i, j) + \delta_{ij}.
			\end{equation*}
		
	Case 3. $i, j \in \operatorname{Fix}(\sigma)^c$ and $i = j$. 
		Then $[S_\sigma]_{i \, k} = 1$ as long as $k$ is $\sigma(i)$ or $\sigma^{-1}(i)$. 
	This indicates that
			\begin{eqnarray*}
			[S_\sigma^2]_{i\,j} 
			& = & [S_\sigma]_{i\, \sigma(i)}[S_\sigma]_{\sigma(i)\,j} + [S_\sigma]_{i\, \sigma^{-1}(i)}[S_\sigma]_{\sigma^{-1}(i)\,j}\\
			& = & [S_\sigma]_{i\, \sigma(i)}[S_\sigma]_{\sigma(i)\,i} + [S_\sigma]_{i\, \sigma^{-1}(i)}[S_\sigma]_{\sigma^{-1}(i)\,i}\\
			& = & 2\\
			& = & C_\sigma(i, j) + \delta_{ij}.
			\end{eqnarray*}
		
	Case 4. $i, j \in \operatorname{Fix}(\sigma)^c$ but $i \neq j$. 
		By a similar argument as above,
			\begin{equation*}
			[S_\sigma^2]_{i\,j} = [S_\sigma]_{i\, \sigma(i)}[S_\sigma]_{\sigma(i)\,j} + [S_\sigma]_{i\, \sigma^{-1}(i)}[S_\sigma]_{\sigma^{-1}(i)\,j}.
			\end{equation*}
	Since $|\operatorname{Fix}(\sigma)| = 3$ and $i \neq j$, either $j = \sigma(i)$ or $j = \sigma^{-1}(i)$, which implies
			\begin{equation*}
			[S_\sigma^2]_{i\,j} = 1 = C_\sigma(i, j) + \delta_{ij}.
			\end{equation*}
		
	Case 5. $i \in \operatorname{Fix}(\sigma)$ and $j \in \operatorname{Fix}(\sigma)^c$. 
		Then $j \neq \sigma(i)$, that is, $[S_\sigma]_{i\,j} = 0$. 
	Thus
			\begin{equation*}
			[S_\sigma^2]_{i\,j} = [S_\sigma]_{i\,i}[S_\sigma]_{i\,j} = 0 = C_\sigma(i, j) + \delta_{ij}.
			\end{equation*}
		
	Case 6. $i \in \operatorname{Fix}(\sigma)^c$ and $j \in \operatorname{Fix}(\sigma)$. 
		By using a similar reasoning as Case 5., 
			\begin{equation*}
			[S_\sigma^2]_{i\,j} = [S_\sigma]_{i\,j}[S_\sigma]_{j\,j} = 0 = C_\sigma(i, j) + \delta_{ij}.
			\end{equation*}
	Thus the proof is completed.
	\end{proof}
	
	Denote $F_3^c(n) := \{\sigma \in S_n \mid |\operatorname{Fix}(\sigma)^c| \leq 3\}$. 
		By using Lemma \ref{lem3}, the following theorem is obtained.
	\begin{thm}\label{thm15}
	Let $G$ be a subgroup of $S_n$ and $\chi$ a character of $G$. 
		Then $d_\chi^G = \det$ if and only if $d_\chi^G(S_\sigma)d_\chi^G(S_\tau) = d_\chi^G(S_\sigma S_\tau)$, for every $\sigma, \tau \in F_3^c(n)$.
	\end{thm}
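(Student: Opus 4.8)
\emph{Plan of proof.} One implication is immediate: if $d_\chi^G=\det$, then multiplicativity of the determinant on all of $M_n(\mathbb{C})$ gives $d_\chi^G(S_\sigma S_\tau)=\det(S_\sigma S_\tau)=\det(S_\sigma)\det(S_\tau)=d_\chi^G(S_\sigma)d_\chi^G(S_\tau)$ for \emph{every} $\sigma,\tau\in S_n$, in particular for those in $F_3^c(n)$. The content is the converse, and the plan is to show that the stated multiplicativity on the matrices $S_\sigma$ with $\sigma\in F_3^c(n)$ already forces $G=S_n$ and $\chi=\varepsilon$; since $d_\varepsilon^{S_n}=\det$, that finishes the proof. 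The observation to start from is that $F_3^c(n)$ consists precisely of the identity, the transpositions, and the $3$-cycles (a permutation moving at most three letters is one of these), so these are the only inputs available.

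First I would feed in $\sigma=\tau=\mathrm{id}$. Here $S_{\mathrm{id}}=I_n$ and, by Theorem~\ref{lem1} (with $X_{\mathrm{id}}=\{\mathrm{id}\}$), $d_\chi^G(I_n)=\hat\chi(\mathrm{id})=\chi(\mathrm{id})$; since $I_n^2=I_n$ the hypothesis gives $\chi(\mathrm{id})^2=\chi(\mathrm{id})$. Because $\chi$ is a \emph{character}, $\chi(\mathrm{id})$ is the degree of the representation affording $\chi$, a positive integer, hence $\chi(\mathrm{id})=1$: thus $\chi$ is a linear character and $\hat\chi$ vanishes nowhere on $G$. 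This is the one step where "character" rather than merely "complex-valued function" is genuinely used, as it rules out the otherwise-possible solution $\chi(\mathrm{id})=0$.

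Next I would use transpositions. For a transposition $\rho$ one has $\rho^2=\mathrm{id}$, hence $S_\rho=P_\rho$ and $S_\rho^2=P_{\rho^2}=I_n$, while Theorem~\ref{lem1} gives $d_\chi^G(S_\rho)=\hat\chi(\rho)$ (as $X_\rho=\{\rho\}$ for a transposition). Applying the hypothesis with $\sigma=\tau=\rho$ yields $\hat\chi(\rho)^2=d_\chi^G(I_n)=1$, so $\hat\chi(\rho)=\pm1\neq0$ and therefore $\rho\in G$. Since the transpositions generate $S_n$, we get $G=S_n$, and $\chi$ is then a linear character of $S_n$; there are exactly two such characters, $\varepsilon$, for which $d_\chi^{S_n}=\det$, and the principal one, for which $d_\chi^{S_n}=\operatorname{perm}$. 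To eliminate the latter I would take $\sigma=\tau$ to be a $3$-cycle $(a\,b\,c)$ (here $n\ge3$): by Theorem~\ref{lem1}, $d_\chi^{S_n}(S_\sigma)=\sum_{\tau\in X_\sigma}\chi(\tau)=|\{\sigma,\sigma^{-1}\}|=2$ in the principal case, whereas Lemma~\ref{lem3} shows that $S_\sigma^2$ agrees with $I_n$ off the three moved letters and equals $I_3+J_3$ (with $J_3$ the $3\times3$ all-ones matrix) on them, so $d_\chi^{S_n}(S_\sigma^2)=\operatorname{perm}(I_3+J_3)=16\neq2^2$. Thus multiplicativity fails for the permanent, forcing $\chi=\varepsilon$ and $G=S_n$, i.e.\ $d_\chi^G=\det$.

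Each individual step is a short computation powered by Theorem~\ref{lem1} and Lemma~\ref{lem3}, and there is no deep obstacle once one notices that identity, transpositions and $3$-cycles exhaust $F_3^c(n)$, that squaring a transposition returns $I_n$, and that squaring a $3$-cycle produces a matrix with an entry equal to $2$ (so it is no longer any $S_\rho$). The point requiring the most care is the range of $n$: the $3$-cycle step needs $n\ge3$, so the cases $n\le2$ must be inspected separately; and, as noted, the character hypothesis is essential exactly at the relation $\chi(\mathrm{id})^2=\chi(\mathrm{id})$.
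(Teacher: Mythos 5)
Your argument for $n\ge 3$ is correct and is essentially the paper's own: test $S_{\mathrm{id}}$ to get $\chi(\mathrm{id})=1$ (this is indeed where ``character'' is used), test transpositions via $S_\rho^2=I$ and Theorem~\ref{lem1} to get $\hat\chi(\rho)=\pm1$, hence $G=S_n$ and $\chi$ linear, then test a $3$-cycle against Lemma~\ref{lem3}. The only divergence is cosmetic: you quote the classification of linear characters of $S_n$ and eliminate the principal one by the single computation $\operatorname{perm}(S_\sigma^2)=\operatorname{perm}(I_3+J_3)=16\neq 2^2$, whereas the paper keeps $c=\chi(a_1\,a_2)$ as an unknown, expands $d_\chi^G(S_\sigma^2)=6c+10$ from Lemma~\ref{lem3}, and solves $6c+10=4$ to get $c=-1$. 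These are the same computation; yours is marginally shorter but imports an external fact, the paper's is self-contained.

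The genuine gap is $n=2$, which you flag as ``to be inspected separately'' and never inspect --- and which cannot be closed by your method, because there the hypothesis is too weak. For $n=2$ one has $F_3^c(2)=S_2$, the only test matrices are $I_2$ and $P_{(1\,2)}$, every product $S_\sigma S_\tau$ is again $I_2$ or $P_{(1\,2)}$, and the permanent equals $1$ on both of these; so $\operatorname{perm}$ satisfies the stated multiplicativity condition on $\{S_\sigma : \sigma\in F_3^c(2)\}$ while $\operatorname{perm}\neq\det$. In other words the literal statement fails at $n=2$, so your deferral hides a real problem rather than a routine verification. (The paper's own treatment of $n=2$ quietly tests the symmetric matrix with all entries equal to $2$, which is not of the form $S_\sigma$, so it too steps outside the stated hypothesis; the theorem needs either the restriction $n\ge 3$ or a hypothesis on a larger class of symmetric matrices, as in the remark following it.) The deferred case $n=1$ is harmless, since $\chi(\mathrm{id})=1$ already gives $d_\chi^G=\det$ there.
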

	\begin{proof}
	Let $\sigma, \tau \in F^c_3(n)$. 
		Clearly, $d_\chi^G(S_\sigma)d_\chi^G(S_\tau) = d_\chi^G(S_\sigma S_\tau)$ if $d_\chi^G = \det$. 
	Suppose that the converse hypothesis is true. 
		Since $S_{id} = I$ and $\chi(id) = \det(S_{id})$, by the assumption, we have
			\begin{equation*}
			\chi(id) = d_\chi^G(S_{id}) = (d_\chi^G(S_{id}))^2 = (\chi(id))^2.
			\end{equation*}
	Because $\chi$ is a character, $\chi(id) \neq 0$, that is, $\chi(id) = 1$. 
		Thus $\chi$ is linear. 
	If $n = 1$, then $d_\chi^G(A) = [A]_{11} = \det(A)$ for arbitrary $A \in M_n(\mathbb{C})$. 
		For each transposition $\omega$ in $S_n$, we have $\omega^2 = id$, so, by the assumption,
			\begin{equation*}
			(\hat{\chi}(\omega))^2 = (d_\chi^G(S_\omega))^2 = d_\chi^G(I) = 1.
			\end{equation*}
	This implies that $\hat{\chi}(\omega)$ is either $1$ or $-1$, that is, $\omega \in G$. 
		If $n = 2$ and $\chi(1 \, 2) = 1$, then
			\begin{equation*}
			128 =
			d_ \chi^G\left(\begin{bmatrix}
			8 & 8 \\
			8 & 8 
			\end{bmatrix}\right)=
			d_ \chi^G\left(\begin{bmatrix}
			2 & 2 \\
			2 & 2 
			\end{bmatrix}\right)
			d_ \chi^G\left(\begin{bmatrix}
			2 & 2 \\
			2 & 2 
			\end{bmatrix}\right) = 64,
			\end{equation*}
		which is a contradiction. 
	This implies that $\chi(1 \, 2) = -1$, and thus $d_\chi^G = \det$. 
		Suppose that $n \geq 3$. 
	Since the set of all transpositions is the generating set of $S_n$, $G = S_n$. 
		For each $\sigma \in S_n$, we can write
			\begin{equation*}
			\sigma = \theta_1 \theta_2 \cdots \theta_k,
			\end{equation*}
		where $\theta_m$ is a transposition for each $m \in [k]$. 
	Because $\chi$ is linear, we can conclude that
			\begin{equation*}
			\chi(\sigma) = \chi(\theta_1)\chi(\theta_2) \cdots \chi(\theta_k).
			\end{equation*}
		Thus, to verify that $d_\chi^G = \det$, it suffices to show that $\chi(\omega) = -1$ for each transposition $\omega$. 
	Let $\sigma = (a_1 \, a_2 \, a_3)$. 
		By Theorem \ref{lem1} and the assumption,
			\begin{equation}\label{eq03}
			d_\chi^G(S_\sigma^2) = (d_\chi^G(S_\sigma))^2 = (\chi(a_1 \, a_2 \, a_3) + \chi(a_1 \, a_3 \, a_2))^2.
			\end{equation}
	By Lemma \ref{lem3}, we have $[S_\sigma^2]_{i j} = 0$ if and only if $i \neq j$ and at least one of $i, j$ is in $\operatorname{Fix}(\sigma)$ and the other is in $\operatorname{Fix}(\sigma)^c$. 
		Thus, for each $\tau \in B := \{id, (a_1 \, a_2), (a_1 \, a_3), (a_2 \, a_3), \sigma, \sigma^{-1}\}$, $\prod\limits_{i = 1}^n[S_\sigma^2]_{i \, \tau(i)} \neq 0$ while $\prod\limits_{i = 1}^n[S_\sigma^2]_{i \, \pi(i)} = 0$ for every $\pi \in S_n \setminus B$. 
	Therefore,
			\begin{eqnarray*}
			d_\chi^G(S_\sigma^2) 
			& = & \sum\limits_{\tau \in S_n}\chi(\tau)\prod\limits_{i = 1}^n[S_\sigma^2]_{i \, \tau(i)}\\ 
			& = & \sum\limits_{\tau \in B}\chi(\tau)\prod\limits_{i = 1}^n (C_\sigma(i, \tau(i)) + \delta_{i \, \tau(i)})\\
			& = & \chi(\sigma^{-1}) + \chi(\sigma) + 2\chi(a_1 \, a_2) + 2\chi(a_1 \, a_3) 
			+ 2\chi(a_2 \, a_3) + 8\chi(id).
			\end{eqnarray*}
		Note that
			\begin{equation*}
			\sigma = (a_1 \, a_3)(a_1 \, a_2) = (a_1 \, a_2)(a_2 \, a_3) = (a_2 \, a_3)(a_1 \, a_3)
			\end{equation*}
		and
			\begin{equation*}
			\sigma^{-1} = (a_1 \, a_2)(a_1 \, a_3) = (a_2 \, a_3)(a_1 \, a_2) = (a_1 \, a_3)(a_2 \, a_3).
			\end{equation*}
	Since $\chi$ is linear, we can compute that
			\begin{equation}\label{eq07}
			\hbox{$\chi(a_1 \, a_2) = \chi(a_1 \, a_3) = \chi(a_2 \, a_3)$ and $\chi(\sigma) = (\chi(a_1 \, a_2))^2 = \chi(\sigma^{-1})$.}
			\end{equation}
		By applying $(\ref{eq07})$, we have $\chi(\sigma) = 1$, which implies that
			\begin{equation}\label{eq04}
			d_\chi^G(S_\sigma^2) = 6\chi(a_1 \, a_2) + 10.
			\end{equation}
	Similarly, by applying $(\ref{eq07})$ to (\ref{eq03}), we obtain that
			\begin{equation}\label{eq05}
			d_\chi^G(S_\sigma^2) = 4.
			\end{equation}
		Consider (\ref{eq04}) and (\ref{eq05}), we have $\chi(a_1 \, a_2) = -1$. 
	Since $a_1, a_2$ are arbitrary, $\chi(\omega) = -1$ for every transposition $\omega$ in $S_n$. 
		Hence $d_\chi^G = \det$.
	\end{proof}
	
	Due to Theorem \ref{thm15}, when we define a generalized matrix function by using a character of a subgroup of $S_n$, $\det$ is the unique generalized matrix function preserving the product on every subset of $M_n(\mathbb{C})$ containing $F_3^c(n)$.
		An obvious example of such subset is $\mathbb{S}_n(\mathbb{C})$.
	Moreover, by Theorem \ref{lem1}, $S_\sigma$ is nonsingular for every $\sigma \in F_3^c(n)$.
		So, $GL_n(\mathbb{C})$ is also a subset of $M_n(\mathbb{C})$ containing $F_3^c(n)$.
	
	\section*{Acknowledgment} The second author would like to thank Faculty of Sciences, Naresuan University, for the financial support on the project number R2561E003.
%	\section*{References}
	\bibliography{mybibfile1}
	
\end{document}